\title{Parametrized Complexity of Expansion Height}
\author{Ulrich Bauer}{Department of Mathematics, Technical University of Munich (TUM)\\{Boltzmannstr. 3, 85748 Garching b. M\"unchen, Germany}}{ulrich.bauer@tum.de}{https://orcid.org/0000-0002-9683-0724}{}
\author{Abhishek Rathod}{Department of Mathematics, Technical University of Munich (TUM)\\{Boltzmannstr. 3, 85748 Garching b. M\"unchen, Germany}}{abhishek.rathod@tum.de}{https://orcid.org/0000-0003-2533-3699}{}
\author{Jonathan Spreer}{School of Mathematics and Statistics, The University of Sydney\\{NSW 2006 Australia}}{jonathan.spreer@sydney.edu.au}{https://orcid.org/0000-0001-6865-9483}{The author is partially supported by grant EVF-2015-230 of the Einstein Foundation Berlin. The authors acknowledges support by The University of Sydney, where parts of this work were finished.}
\authorrunning{U. Bauer, A. Rathod and J. Spreer}
\keywords{Simple-homotopy theory, simple-homotopy type, parametrized complexity theory, simplicial complexes, (modified) dunce hat}
\theoremstyle{definition}
\newcounter{problemcounter}
\newenvironment{problem}[4][\unskip]
{
	\medskip\noindent {\textbf{Problem
	\arabic{problemcounter}} (\textsc{#1}).\nopagebreak

	{\begin{tabular}{lp{0.75\textwidth}}	
		\textsc{Instance}: & #2 \\
		\textsc{Parameter}: & #3 \\
		\textsc{Question}: & #4 \\
	\end{tabular}}
	\stepcounter{problemcounter}}
	\medskip
}
\newenvironment{problemwp}[3][\unskip]
{
	\medskip\noindent {\textbf{Problem
	\arabic{problemcounter}} (\textsc{#1}).\nopagebreak

	{\begin{tabular}{lp{0.75\textwidth}}	
		\textsc{Instance}: & #2 \\
		\textsc{Question}: & #3 \\
	\end{tabular}}
	\stepcounter{problemcounter}}
	\medskip
}
\DeclareMathOperator{\se}{\diagup \! \! \! \! \: \! \: \searrow}
\newcommand{\bbm}{\mathbb{M}}
\newcommand{\bbn}{\mathbb{N}}
\newcommand{\SCC}{\mathcal{S}}
\newcommand{\KCC}{\mathcal{K}}
\newcommand{\SSS}{\mathscr{S}}
\newcommand{\complex}{K}
\newcommand{\altcomplex}{L}
\newcommand{\face}{\tau}
\newcommand{\smallface}{\sigma}
\newcommand{\horn}{\mathbf{H}}
\newcommand{\str}{\operatorname{star}}
\newcommand{\expansion}[2]{\nearrow^{#2}_{#1}}
\newcommand{\collapse}[2]{\searrow^{#2}_{#1}}
\newcommand{\gadget}{\mathbf{P}}
\newcommand{\dunce}{\mathbf{D}}
\newcommand{\mathcommand}[3]{\newcommand{#1}[1]{\ensuremath{#2{##1}#3}}}
\newcommand\restr[2]{{%
  \left.\kern-\nulldelimiterspace %
  #1 %
  \vphantom{\big|} %
  \right|_{#2} %
  }}
\mathcommand{\Mmatchset}{\mathcal{M}(}{)}
\mathcommand{\Mmatchsetr}{{\mathcal{M}}(}{)}
\newcommand{\paraexp}{\textsc{Ordered Erasibility Expansion Height}\xspace}
\newcommand{\paraexptwo}{\textsc{Erasibility Expansion Height}\xspace}
\newcommand{\pMAS}{\textsc{Axiom Set}\xspace}
\newcommand{\MAS}{\textsc{Axiom Set}\xspace}
\newcommand{\closed}{untouched\xspace}
\newcommand{\open}{touched\xspace}
\newcommand{\oMm}{\textsc{Optimal Morse Matching}\xspace}
\newcommand{\WP}{{\bf W{[P]}}\xspace}
\newcommand{\NP}{{\bf NP}\xspace}
\newcommand{\dshe}{\textsc{Erasibility 3-Expansion Height}\xspace}
\newcommand{\dshetwo}{\textsc{Ordered Erasibilty 3-Expansion Height}\xspace}
\newcommand{\alphastrings}{\Sigma^{\ast}}
\definecolor{lightergray}{rgb}{0.8,0.8,0.8}
\definecolor{verylightgray}{rgb}{0.95,0.95,0.95}
\definecolor{verylightblue}{rgb}{0.925,0.95,1.0}
\definecolor{lightred}{rgb}{1.0,0.85,0.85}
\begin{document}

\maketitle

\begin{abstract}
Deciding whether two simplicial complexes are homotopy equivalent is a fundamental problem in topology, which is famously undecidable.
There exists a combinatorial refinement of this concept, called simple-homotopy equivalence: two simplicial complexes are of the same simple-homotopy type if they can be transformed into each other by a sequence of two basic homotopy equivalences, an elementary collapse and its inverse, an elementary expansion. 
In this article we consider the following related problem: given a $2$-dimensional simplicial complex, is there a simple-homotopy equivalence to a $1$-dimensional simplicial complex using at most~$p$~expansions?
We show that the problem,  which we call the \emph{erasability expansion height}, is \WP-complete in the natural parameter~$p$.

\end{abstract}

\section{Introduction}

Homotopy theory lies at the heart of algebraic topology. In an attempt to make the concept of homotopy equivalence more amenable to combinatorial methods, Whitehead developed what turned out to be a combinatorial refinement of the theory, called \emph{simple-homotopy theory}. Simple-homotopy theory considers sequences of elementary homotopy equivalences defined on simplicial complexes (or, more generally, CW complexes): an \emph{elementary collapse}, which takes a face of a complex contained only in a single proper coface and removes both faces, and its inverse operation, called an \emph{elementary expansion}. 
Two simplicial complexes are then said to be of the same simple-homotopy type if one can be transformed into the other by a sequence of elementary collapses and expansions. Complexes of the same simple-homotopy type are homotopy equivalent, but the converse is not always true \cite{MR0005352}, the obstruction being an element of the \emph{Whitehead group} of the fundamental group.
However, Whitehead proved that all homotopy-equivalent complexes with a trivial fundamental group are in fact of the same simple-homotopy type \cite{MR0035437}, and thus in this particular case the notions of simple-homotopy and homotopy coincide.  

A presentation of the fundamental group can be read off from a two-dimensional complex such that the presentation is balanced and describes the trivial group if and only if this complex is contractible~\cite{MR1279174}. Since the decidability of the triviality problem for balanced presentations is open~\cite{MR1921705}, the same is also true for the decidability of contractibility of 2-complexes.
Hence, the decidability of the existence of a simple-homotopy equivalence from a 2-complex to a point is also open.   
In contrast, the problem of deciding whether a given complex has trivial fundamental group is famously undecidable already for $2$-complexes through its connection to the word problem, see, for instance, \cite{MR0260851}. 
It follows that sequences of elementary collapses and expansions proving simple-homotopy equivalence between a $2$-complex and a point can be expected to be long, if not unbounded. Nonetheless, understanding these sequences offers a great reward: the statement that any contractible $2$-complex contracts to a point using only expansions up to dimension three is equivalent to a weaker variant \cite[p.~34--35]{MR2341532} of the Andrews--Curtis conjecture \cite{MR0173241,MR0380813}.

In this article, motivated by the aforementioned problems, we investigate the computational (parametrized) complexity of a number of variants of the problem of deciding contractibility.
More precisely, we focus on the problem of deciding whether a given $2$-complex admits a simple-homotopy to a $1$-complex using at most $p$ expansions, called \paraexptwo. In addition, we consider a variant, called \paraexp, which requires that all expansions come at the very beginning of the sequence. It is worth noting that \paraexptwo and \paraexp are equivalent for CW complexes for which one can readily swap the order of expansions and collapses~\cite[p.~34]{MR2341532}. However, for simplicial complexes, the ordered and unordered expansion heights may differ. 

In \Cref{sec:hardness}, we prove that \paraexptwo is \WP-hard, see \Cref{thm:hardness}. The proof uses a reduction from \pMAS. The same reduction also establishes \WP-hardness of \paraexp. Note that a reduction from \pMAS is also used by the third author and others in \cite{MR3472422} to establish \WP-hardness of a parametrized version of \oMm. However, unlike in~\cite{MR3472422}, the use of combinatorial and topological properties of the dunce hat is a key ingredient of the reduction used in this paper. In particular, there is only one gadget in the reduction -- a subdivision of the so-called \emph{modified dunce hat}~\cite{MR3909634}, see \Cref{fig:gadget}. In this sense the techniques used in this paper are also related to recent work by the first and second author in~\cite{MR3909634}, where they show hardness of approximation for some Morse matching problems.

In \Cref{sec:membership}, we show that \paraexptwo and \paraexp are both in \WP, and hence also \WP-complete, see \Cref{thm:inWP,thm:hardness,thm:inWPordered}. Both results rest on the key observation that a $2$-complex is erasable if and only if greedily collapsing triangles yields a $1$-dimensional complex (\Cref{prop:tancer}), as shown by Tancer~\cite[Proposition 5]{MR3439259}. 

In \Cref{sec:NPhardness} we show that, as a consequence of the above reduction, the problem of deciding whether a $2$-complex can be shown to be simple-homotopy equivalent to a $1$-complex using only $3$-dimensional expansions, called \dshe, is \NP-complete, see \Cref{thm:dshe}.

\section{Definitions and Preliminaries}

\subsection{Simplicial complexes}
A (finite) \emph{abstract simplicial complex} is a collection $\complex$ of 
subsets of a finite ground set $V$ such that if $\face$ is an element of $\complex$, and $\smallface$ is a
nonempty subset of $\face$, then $\smallface$ is an element of $\complex$. 
The ground set $V$ is called the \emph{set of vertices} of $\complex$.
Since simplicial complexes are determined by their facets, we sometimes present simplicial complexes by listing their facets.
A \emph{subcomplex} of $\complex$ is a subset $\altcomplex \subseteq \complex$ which is itself a simplicial complex.
Given a subset $W \subseteq V$ of the vertices of $K$, the \emph{induced subcomplex on $W$} 
consists of all simplices of $K$ that are subsets of $W$.

The elements of $\complex$ are referred to as its \emph{faces}. The \emph{dimension of a 
face} is defined to be its cardinality minus one, and the \emph{dimension of $\complex$} 
equals the largest dimension of its faces. For brevity, we sometimes refer to a $d$-dimensional
simplicial complex as a \emph{$d$-complex} and to a $d$-dimensional face as a \emph{$d$-face}. 
The $0$-, $1$-, and $2$-faces of a $d$-complex $\complex$ are called its \emph{vertices}, \emph{edges}, and \emph{triangles} respectively. 
Faces of $\complex$ which are not properly contained in any other face are called 
\emph{facets}.
An $(m-1)$-face $\smallface \in \complex$
which is contained in exactly one $m$-face $\face \in \complex$ is called 
\emph{free}. %

The \emph{star} of a vertex $v$ of complex $\complex$, written $\str_{\complex} (v)$, 
is the subcomplex consisting of all faces of $\complex$ containing $v$, together with 
their faces. If a map $\phi: V \to W $ between the vertex sets of two simplicial complexes 
$\complex$ and $\altcomplex$, respectively, sends every simplex $\sigma \in \complex$ to a simplex $\phi(\sigma) \in \altcomplex$, then the induced map $f: \complex \to \altcomplex, \sigma \mapsto \phi(\sigma)$, is said to be \emph{simplicial}.

\subsection{Simple-homotopy} \label{sec:simplehomotopy}

We introduce the basic notions of simple-homotopy used in the present paper. 
The general concept of simple-homotopy can be understood independently
from the notion of homotopy. In this sense this article aims to be self-contained.
For further reading on homotopy theory we refer to \cite{MR1867354}.

In short, a simple-homotopy equivalence is a refinement of a homotopy equivalence. 
It can be described purely combinatorially with the help of the following definition.  

\begin{definition}[Elementary collapses and expansions]
  Let $\complex_0$ be a simplicial complex, and let $\face, \smallface \in \complex_0$ 
  be an $m$-face and an $(m-1)$-face respectively such that $\smallface \subset \face$, and
  $\smallface$ is free in $\complex_0$. 

  We say that $\complex_1 = \complex_0 \setminus \{ \face , \smallface \}$
  arises from $\complex_0$ by an \emph{elementary collapse of dimension $m$} or \emph{elementary $m$-collapse}, denoted by 
  $\complex_0 \searrow \complex_1$. Its inverse, the operation 
  $\complex_0 = \complex_1 \cup \{ \face , \smallface \}$ is called an \emph{elementary expansion of dimension $m$} or \emph{elementary $m$-expansion}, written $\complex_0 \nearrow \complex_1$.
  If the complex is implicit from context, we denote elementary collapses by $\collapse{\smallface}{\face}$ and elementary expansions by $\expansion{\smallface}{\face}$.
  An elementary collapse or an elementary expansion is sometimes referred to as an \emph{elementary move}, or simply a \emph{move}.

  If there exists a sequence of elementary collapses turning a complex $\complex_0$ into $\complex_1$ we write $\complex_0 \searrow \complex_1$ and say that $\complex_0$ \emph{collapses to} $\complex_1$. If $\complex_1$ is one-dimensional, we say that $\complex_0$ is \emph{erasable}. If $\complex_1$ is merely a point we call $\complex_0$ \emph{collapsible}. 

Finally, we write $\complex_0 \nearrow \complex_1$ to indicate 
a sequence of expansions and say that $\complex_0$ \emph{expands} to $\complex_1$. 
\end{definition}

It follows that an expansion $\expansion{\smallface}{\face}$ can only be performed in a simplicial complex $\complex$ if all codimension $1$ faces of $\face$ except for $\smallface$ are already in $\complex$. Hence, let $\face$ be an $m$-face of a simplicial complex $\complex$, and let $\smallface$ be one of its $(m-1)$-faces. An \emph{($m$-dimensional) horn} $\horn (\face, \smallface)$ associated to the pair $(\face, \smallface)$ is the simplicial complex generated by the $(m-1)$-faces of $\face$ apart from $\smallface$. 

All $m$-expansions and $m$-collapses with $m>1$ leave the vertex set unchanged.

\begin{definition}[Simple-homotopy equivalence, simple-homotopy graph]
  Two simplicial complexes $\complex$ and $\altcomplex$ are said to be \emph{simple
homotopy equivalent} or of coinciding \emph{simple-homotopy type}, written $\complex \se \altcomplex$, 
if there exists a sequence $\SCC$ of elementary moves 
turning one into the other.
In this case, we write $\SCC: \complex \se \altcomplex$.

  The \emph{dimension of a simple-homotopy equivalence} is the maximum of the dimensions of $\complex$, $\altcomplex$ and 
of any elementary expansion or collapse in the sequence.

  The graph whose nodes are simplicial complexes, and two nodes are adjacent if their corresponding complexes are related by an elementary collapse is called \emph{simple-homotopy graph}. Naturally, its connected components are in one-to-one correspondence with simple-homotopy types.
\end{definition}

Two simplicial complexes of the same simple-homotopy type are homotopy equivalent, but the converse is not true, see, for instance, \cite{MR0005352}. For simple-homotopy equivalent simplicial complexes we know the following.

\begin{theorem}[Wall \cite{MR3439259}, Matveev \protect{\cite[Theorem 1.3.5]{MR2341532}}]
 \label{thm:deformation}
 Let $\complex$ and $\altcomplex$ be two simplicial complexes of the same simple-homotopy type and of dimension at most $m > 2$.
 Then there exists a simple-homotopy equivalence of dimension at most $m+1$, taking one to the other.
\end{theorem}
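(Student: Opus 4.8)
The plan is to induct on the dimension $n$ of a given simple-homotopy equivalence $\SCC \colon \complex \se \altcomplex$, lowering $n$ by one at each step until it reaches $m+1$. I may assume $n \ge m+2$, since otherwise there is nothing to prove. The goal of a single reduction step is to manufacture, out of an equivalence of dimension $n$, a new equivalence $\complex \se \altcomplex$ all of whose complexes and elementary moves have dimension at most $n-1$; iterating the step then yields the claimed bound $m+1$.

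For one reduction step I would first bring $\SCC$ into the normal form in which all expansions precede all collapses. This rests on the standard fact that a collapse immediately followed by an expansion can be rewritten with the expansion first, either by cancelling a dual pair or by commuting two independent moves, and that none of these rewrites raises the dimension (this is routine for CW complexes and, after passage to the simplicial setting, available at the cost of extra moves, which is harmless since the theorem only constrains the dimension, see \cite[p.~34]{MR2341532}). Next I would sort the moves by dimension: every expansion of dimension at most $n-1$ may be carried out before the $n$-expansions, and dually every $n$-collapse before the remaining collapses, because a top-dimensional move neither supplies a free face for, nor blocks, a lower-dimensional one. This produces a chain
\[ \complex \nearrow N' \nearrow N \searrow N'' \searrow \altcomplex, \]
in which $N'$ and $N''$ have dimension at most $n-1$, the arrow $N' \nearrow N$ consists only of $n$-expansions, and $N \searrow N''$ only of $n$-collapses. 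Everything outside the middle slab $N' \nearrow N \searrow N''$ already lives in dimension at most $n-1$, so the reduction step comes down to the single claim that $N' \se N''$ through complexes of dimension at most $n-1$.

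Proving this claim is the heart of the matter and the step I expect to be the main obstacle. Each $n$-simplex $\face$ created in $N' \nearrow N$ enters with a free $(n-1)$-face and is later removed in $N \searrow N''$ through a possibly different free $(n-1)$-face, so passing from $N'$ to $N''$ amounts to sliding, for each such $\face$, its distinguished facet from its birth position to its death position across the boundary sphere $\partial\face$, while leaving the rest of $N'$ intact. Realising these slides by collapses and expansions of dimension at most $n-1$ requires being able to homotope the attaching data of the $n$-cells past the remaining cells of $N'$, and this disentangling can be carried out precisely because we stay strictly above dimension two, where the recombination is governed by higher homotopy rather than by the fundamental group. This is the technical core of the formal-deformation theorems of Wall and Matveev, and I would ultimately invoke their argument \cite[Theorem~1.3.5]{MR2341532} for it. That the hypothesis $m > 2$ cannot be dropped is witnessed by the borderline case: for $m = 2$ the conclusion would assert that simple-homotopy equivalent $2$-complexes deform through dimension $3$, which is equivalent to the (open) Andrews--Curtis conjecture \cite{MR0173241,MR0380813}, so no argument of this type can push the dimension down to $m+1$ when $m = 2$.
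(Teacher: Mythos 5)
The first thing to note is that the paper does not prove this statement at all: it is quoted as a classical theorem of Wall and Matveev, with the proof living entirely in the cited literature. So the only question is whether your argument is self-contained, and it is not --- it is circular. Your outer scaffolding (normalize so that expansions precede collapses, sort moves by dimension to isolate a top-dimensional slab $\complex \nearrow N' \nearrow N \searrow N'' \searrow \altcomplex$, and reduce the theorem to the claim that $N' \se N''$ through complexes of dimension at most $n-1$) is indeed the standard reduction. But at precisely the step you yourself call ``the heart of the matter'' you write that you ``would ultimately invoke their argument \cite[Theorem~1.3.5]{MR2341532}'' --- and that theorem \emph{is} the statement being proved, not an auxiliary lemma. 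The entire mathematical content of Wall's and Matveev's proofs is the trading argument you only gesture at: showing that a transient $n$-simplex, born through one free facet and dying through another, can be eliminated by moves of dimension at most $n-1$, which requires realizing a homotopy of attaching data across the sphere $\partial\face$ by elementary expansions and collapses; this is also exactly where the hypothesis $m>2$ does its work, as your own closing remark about the Andrews--Curtis conjecture \cite{MR0173241,MR0380813} correctly observes. A proof that defers this step to the theorem itself proves nothing.

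Two secondary points, both repairable in principle but stated too glibly. First, the blanket justification for the sorting step --- ``a top-dimensional move neither supplies a free face for, nor blocks, a lower-dimensional one'' --- is false as stated: an $n$-collapse $\collapse{\smallface}{\face}$ deletes an $(n-1)$-simplex and can thereby free an $(n-2)$-face, so order between collapses of adjacent dimensions genuinely matters; the specific swaps you need (lower-dimensional expansions moved earlier past $n$-expansions, $n$-collapses moved earlier past lower-dimensional collapses) do go through, but each requires the short face-dimension check, not the general slogan. Second, the expansions-first normal form is routine for CW complexes \cite[p.~34]{MR2341532} but is exactly the point where, as the paper itself notes, the simplicial category behaves differently (ordered and unordered expansion heights can differ for simplicial complexes), so ``available at the cost of extra moves'' is an assertion that needs an argument --- harmless for a dimension bound only if the extra moves are shown not to raise dimension. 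Neither of these is the fatal flaw; the circular core step is.
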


For the case $m=2$, \Cref{thm:deformation} is still open and known as the (topological or geometric) Andrews--Curtis conjecture~\cite{MR3024764,MR2341532,MR1921712}. On the other hand, it is known that any contractible $2$-complex is also simple-homotopy equivalent to a point \cite{MR0035437}. Hence, any pair of contractible $2$-complexes can be connected by a simple-homotopy equivalence of dimension at most four -- but determining whether we can always decide if such a simple-homotopy equivalence exists is an open question~\cite{MR1921705}, equivalent to the triviality problem for balanced group presentations~\cite{MR1279174}.

\subsection{Parametrized complexity}

Parametrized complexity, as introduced by Downey and Fellows in \cite{MR1656112}, is a refinement of classical complexity theory. The theory revolves around the general idea of developing complexity bounds for instances of a problem not just based on their size, but also involving an additional \emph{parameter}, which might be significantly smaller than the size.
Specifically, we have the following definition.

\begin{definition}[Parameter, parametrized problem%
]
  Let $\Sigma$ be a finite alphabet.
  \begin{enumerate}
    \item A \emph{parameter} of $\alphastrings$, the set of strings over $\Sigma$, is a function $\rho: \alphastrings \to \bbn$, attaching to every input $w \in \alphastrings$ a natural number $\rho(w)$.
    \item A \emph{parametrized problem} over $\Sigma$ is a pair $(P,\rho)$ consisting of a set $P\subseteq \alphastrings$ and a parametrization $\rho : \alphastrings \to \bbn$.
  \end{enumerate}
\end{definition}

In this article we consider the complexity class \WP for parametrized problems, following the definition by Flum and Grohe~\cite{MR2238686}.

\begin{definition}[Complexity Class \WP] 
 Let $\Sigma$ be an alphabet and $\rho: \alphastrings \to \bbn $  a parametrization.
A nondeterministic Turing machine $\bbm$ with input alphabet $\Sigma$ is
called $\rho$-\emph{restricted} if there are computable functions $f,h: \bbn\to\bbn$ and
a polynomial $p$ (with coefficients in the set of natural numbers) such that on every run with input $x\in \Sigma^{\ast}$ the
machine $\bbm$ performs at most $f(k)\cdot p(|x|)$ steps, at most $h(k)\cdot \log |x|$ of them
being nondeterministic, where $k := \rho(x)$.
\WP is the class of all parametrized problems $(P,\rho)$ that can be decided by a $\rho$-restricted nondeterministic Turing machine.
\end{definition}

\section{Problems}

In this article we consider the following parametrized problems. 

\begin{problem}[\paraexptwo]
{A $2$-dimensional simplicial complex $\complex$ and a natural number $p$.}
{$p$.}
{Is there a path in the simple-homotopy graph connecting $\complex$ to a $1$-complex using at most $p$ expansions?}
\end{problem}

\begin{problem}[\paraexp]
{A $2$-dimensional simplicial complex $\complex$ and a natural number $p$.}
{$p$.}
{Is there a path in the simple-homotopy graph connecting $\complex$ to a $1$-complex using first at most $p$ expansions, followed by a sequence of only collapses?}
\end{problem}

In \cref{sec:pComplexity}, we establish \WP-completenes for \paraexptwo and \paraexp.

The hardness proof works via a parametrized reduction using the \pMAS problem, which is a classical \NP-complete problem~\cite[p.~263]{MR519066} that is well-known to be \WP-complete with respect to the appropriate parameter~\cite[p. 473]{MR1656112}. 

\begin{problem}[\pMAS]
{A finite set $S$ of \emph{sentences}, an \emph{implication relation} $R$ consisting of pairs $(U,s)$ where $U \subseteq S$ and $s \in S$, and a positive integer $p\leq |S|$.}
{$p$.}
{Is there a set $S_0 \subseteq S$, called an \emph{axiom set}, with $|S_0| \leq p$ and a positive integer $n$ such that if we recursively define \[S_i := S_{i-1} \cup \{ s \in S \mid \exists U \subseteq S_{i-1} :(U, s) \in R\} \]
for $1 \leq i \leq n$, then $S_n = S$?}
\end{problem}

\begin{remark}
  \label{rem:restriction}
  Note that every instance of \pMAS can be reduced in polynomial time to an instance for which every sentence must occur in at least one implication relation: First iteratively remove all sentences from the instance which do not feature in at least one implication relation. Then, for each of them, reduce $p$ by one (note that each of them must necessarily be an axiom). It follows that solving the reduced instance is equivalent to solving the original instance.

  Similarly, note that if there exists an implication $(U,s) \in R$, $s \in U$, we can simply omit it and, if this deletes $s$ from the instance altogether, decrease $p$ by one.
\end{remark}

In  \cref{sec:NPhardness}, we show that the following variants of the expansion height problem are \NP-complete.

\begin{problemwp}[\dshe]
{A finite $2$-dimensional simplicial complex $\complex$ and a natural number $p$.}
{Is there a path in the simple-homotopy graph connecting $\complex$ to a $1$-complex using at most $p$ expansions, all of which are $3$-expansions? 
}
\end{problemwp}

\begin{problemwp}[\dshetwo]
{A finite $2$-dimensional simplicial complex $\complex$ and a natural number $p$.}
{Is there a path in the simple-homotopy graph connecting $\complex$ to a $1$-complex using first at most $p$ expansions, all of which are $3$-expansions, followed by a sequence of only collapses? 
}
\end{problemwp}

\section{Contractibility and collapsibility for $2$-complexes}

The main gadget used in the proof of our main result, \Cref{thm:hardness}, is based on the simplest $2$-dimensional contractible complex which is not collapsible to a point -- the \emph{dunce hat}. Hence, before we describe our main gadget in detail, we start this section by briefly discussing minimal triangulations of the dunce hat, and a variant that is collapsible through a unique free edge, the \emph{modified dunce hat}.

\subsection{The dunce hat}
\label{ssec:DH}

In the category of CW complexes, the dunce hat can be obtained by identifying two boundary edges of a triangle to build a cone and then gluing the third edge along the seam of the first gluing.
The resulting complex does not have a collapsible triangulation. On the other hand, the dunce hat is known to be contractible~\cite{MR0156351}.

The smallest simplicial complexes realizing this construction have $8$ vertices, $24$ edges and $17$ triangles. There are seven such minimal triangulations of the dunce hat~\cite{Paixao143SphereRec}. One such triangulation, denoted by $\dunce$, is shown in \Cref{fig:DH}.
The dunce hat $\dunce$ has two horns, namely $\horn (\{ 2,7,8 \} , \{ 1,2,7,8 \})$ and $\horn (\{ 3,5,6 \}, \{ 1,3,5,6 \})$, and hence admits two $3$-expansions, namely $\expansion{\{ 2,7,8 \}}{\{ 1,2,7,8 \}}$ and $\expansion{\{ 3,5,6 \}}{\{ 1,3,5,6 \}}$ respectively. They are shown by the shaded areas in \Cref{fig:DH}.

\begin{figure}[hb]
  \centering{
  \begin{tikzpicture}[scale=0.6]%
{ [every path/.style = {line cap=round,line join=round}]
    \coordinate (r) at (0,1);
    \coordinate (x) at ({sqrt(5/8 + sqrt(5)/8)},{-(1 - sqrt(5))/4});
    \coordinate (w) at ({sqrt(5/8 - sqrt(5)/8)},{-(1 + sqrt(5))/4});
    \coordinate (t) at ({-sqrt(5/8 - sqrt(5)/8)},{-(1 + sqrt(5))/4});
    \coordinate (v) at ({{-sqrt(5/8 + sqrt(5)/8)}},{-(1 - sqrt(5))/4});
    \coordinate (s) at (0,4);
    \coordinate (u) at ({-2*sqrt(3)},-2);
    \coordinate (p) at ({-2/3*sqrt(3)},-2);
    \coordinate (q) at ({2/3*sqrt(3)},-2);
    \coordinate (ur) at ({2*sqrt(3)},-2);
    \coordinate (ql) at ({-4/3*sqrt(3)},0);
    \coordinate (qr) at ({4/3*sqrt(3)},0);
    \coordinate (pl) at ({-2/3*sqrt(3)},2);
    \coordinate (pr) at ({2/3*sqrt(3)},2);
    
    \fill[verylightgray] (u)--(s)--(ur)--cycle;
    \fill[lightergray] 
    (u)--(p)--(t)--(v)--(ql)--cycle
    (ur)--(q)--(w)--(x)--(qr)--cycle
    ;

    \draw (u)--(ur);
    \draw[lightergray] (u)--(s)--(ur);
    \draw (w)--(x);
    \draw (x)--(r);
    \draw (r)--(v);
    \draw (u)--(v);
    \draw (t)--(v);
    \draw (t)--(w);
    \draw (t)--(p);
    \draw (w)--(p);
    \draw (w)--(q);
    \draw (w)--(ur);
    \draw (x)--(ur);
    \draw (x)--(qr);
    \draw (x)--(pr);
    \draw (r)--(pr);
    \draw (s)--(r);
    \draw (pl)--(r);
    \draw (pl)--(v);
    \draw (ql)--(v);
    \draw (u)--(t);
    \draw (r)--(t);
    \draw (r)--(w);
    
    \node[left=3pt] at (r) {$4$};
    \node[above right] at (x) {$8$};
    \node[above right] at (w) {$7$};
    \node[above left] at (t) {$6$};
    \node[above left] at (v) {$5$};
    \node[above,lightgray] at (s) {$1$};
    \node[left,lightgray] at (u) {$1$};
    \node[below] at (p) {$3$};
    \node[below] at (q) {$2$};
    \node[right] at (ur) {$1$};
    \node[left,lightgray] at (ql) {$3$};
    \node[right,lightgray] at (qr) {$2$};
    \node[left,lightgray] at (pl) {$2$};
    \node[right,lightgray] at (pr) {$3$};
}
\end{tikzpicture}
\hfil
  \begin{tikzpicture}[scale=.9]%
{ [every path/.style = {line cap=round,line join=round}]
    \coordinate (r) at (0,-1);
    \coordinate (w) at (-1,0);
    \coordinate (t) at (0,1);
    \coordinate (v) at (1,0);
    \coordinate (s) at (2,-2);
    \coordinate (u) at (2,2);
    \coordinate (sb) at (-2,2);
    \coordinate (q) at (-2,0);
    \coordinate (ur) at (-2,-2);
    \coordinate (ql) at (2,0);
    \coordinate (qr) at (0,-2);
     
    \fill[verylightgray] (u)--(s)--(ur)--(sb)--cycle;
    \draw[lightergray] (u)--(s)--(ur)--(sb)--cycle;
    \draw (ur)--(sb);
    \draw (r)--(v);
    \draw (sb)--(u);
    \draw (v)--(u);
    \draw (t)--(sb);
    \draw (t)--(v);
    \draw (t)--(w);
    \draw (w)--(q);
    \draw (w)--(ur);
    \draw (w)--(sb);
    \draw (r)--(ur);
    \draw (r)--(qr);
    \draw (s)--(r);
    \draw (s)--(v);
    \draw (ql)--(v);
    \draw (u)--(t);
    \draw (r)--(t);
    \draw (r)--(w);
    
    \node[left=1.5mm] at (r) {$6$};
    \node[right=1mm] at (w) {$5$};
    \node[above] at (t) {$4$};
    \node[left=1mm] at (v) {$7$};
    \node[below right,lightgray] at (s) {$3$};
    \node[above right] at (u) {$1$};
    \node[above left] at (sb) {$3$};
    \node[left] at (q) {$2$};
    \node[below left] at (ur) {$1$};
    \node[right,lightgray] at (ql) {$2$};
    \node[below,lightgray] at (qr) {$2$};

}
\end{tikzpicture}
  }
  \caption{Left: The $8$-vertex triangulation $\dunce$ of the dunce hat.  The two expansions turning it collapsible are highlighted.
  Right: The $7$-vertex triangulation $\gadget$ of the modified dunce hat. \label{fig:DH}\label{fig:punctDH}}
\end{figure}
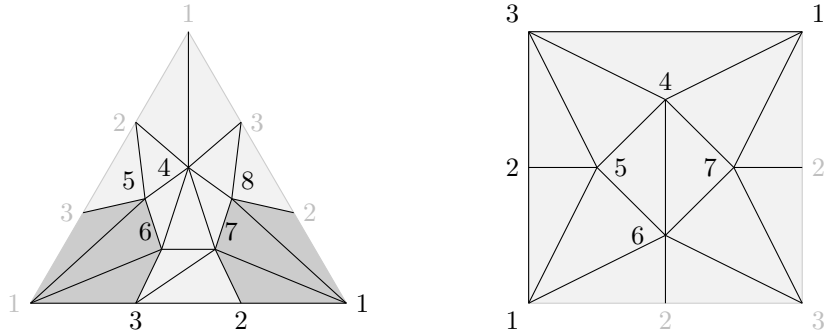

Note that after any of these two expansions we obtain a collapsible complex: After the expansion $\expansion{\{ 2,7,8 \}}{\{ 1,2,7,8 \}}$ and the collapses $\collapse{\{ 1,7,8 \}}{\{ 1,2,7,8 \}}$, $\collapse{\{ 1,7 \}}{\{ 1,2,7 \}}$, and $\collapse{\{ 1,8 \}}{\{ 1,2,8 \}}$, the edge $\{ 1,2\}$ becomes free and thus $\dunce$ becomes collapsible.
Similarly, starting with $\expansion{\{ 3,5,6 \}}{\{ 1,3,5,6 \}}$, one may perform the collapse $\collapse{\{ 1,5,6 \}}{\{ 1,3,5,6 \}}$ and proceed in an analogous way.
In particular, this shows that the dunce hat has the simple-homotopy type of a point, and in fact can be made collapsible by using a single expansion.

\subsection{The modified dunce hat}
\label{ssec:mDH}

Rather than working with the dunce hat directly, we base the construction of our gadget for the proof of \Cref{thm:hardness} on the \emph{modified dunce hat} \cite{Hachimori2000Combinatorics}. More precisely, we ``insert'' a free edge into the dunce hat. 
For instance,~\Cref{fig:punctDH} depicts a triangulation of the modified dunce hat, which we denote by $\gadget$, with $\{ 1,3 \}$ as the unique free edge. This particular triangulation of the modified dunce hat uses only $7$ vertices, $19$ edges, and $13$ triangles. 
The modified dunce hat has previously been used as a gadget to show hardness of approximation for Morse matchings \cite{MR3909634}.

If we assume that $\gadget$ is part of a larger complex $\complex$, in which edge $\{ 1,3 \}$ is glued to triangles not lying in $\gadget$, then  $\{ 1,3 \}$ is not free. In this case, the triangles of $\gadget$ can be collapsed away in essentially two distinct ways. Either, at some point in a simple-homotopy on $\complex$, the edge $\{ 1,3 \}$ becomes free and thus the triangles of $\gadget$ collapse, or the triangles of $\gadget$ become collapsible by performing one of two possible $3$-expansions on $\gadget$. Looking at the latter case in more detail, we have the following sequences of expansions and collapses:
$$
\expansion{\{ 2,5,6 \}}{\{ 1,2,5,6 \}}, \quad  \collapse{\{ 1,5,6 \}}{\{ 1,2,5,6 \}}, \quad \collapse{\{ 1,5 \}}{\{ 1,2,5 \}}, \quad \collapse{\{ 1,6 \}}{\{ 1,2,6 \}}
$$
and
$$
\expansion{\{ 2,6,7 \}}{\{ 2,3,6,7 \}}, \quad \collapse{\{ 3,6,7 \} }{\{ 2,3,6,7 \}}, \quad \collapse{\{ 3,6 \}}{\{ 2,3,6 \}}, \quad \collapse{\{ 3,7 \}}{\{ 2,3,7 \}}.
$$
In the first case, the edge $ \{ 1,2 \} $ is freed, in the second case, the edge $ \{ 2,3 \} $ is freed. Both sequences can be extended to a collapsing sequence of the entire complex $\gadget$.

\subsection{The main gadget}
\label{ssec:gadget}

Our gadget for the proof of \Cref{thm:hardness} is a subdivided version of the modified dunce hat $\gadget$ from \Cref{ssec:mDH}. More precisely, it is determined by two positive integers $m$ and $\ell$, denoted by $\gadget_{m,\ell}$, and can be constructed from the complex $\gadget$ in essentially two steps.

\begin{enumerate}
  \item Subdivide the edge $\{ 1,3 \}$ of $\gadget$ $(m-1)$ times, thereby introducing vertices $x_1, \ldots x_{m-1}$. Relabel $1 \to x_0$ and $3 \to x_m$ to obtain $m$ free edges $f_i = \{ x_{i-1} , x_{i} \}$, $1 \leq i \leq m$.
  \item Remove the edge $\{ 4,6 \}$ and place $\ell$ vertex-disjoint copies of the disk 
  $$ \{ \{ c_j , a_j, y_j \} , \{ c_j, y_j, z_j \}, \{ c_j, z_j, b_j \} , \{ d_j, a_j, y_j  \} , \{ d_j, y_j, z_j \}, \{ d_j, z_j, b_j \} \}, $$
  $1 \leq j \leq \ell$, inside the $4$-gon in the center of $\gadget$ bounded by $4$, $5$, $6$, and $7$. Triangulate the remaining space in the interior of the $4$-gon. This creates edges $e_j = \{ y_j, z_j \}$, $ 1 \leq j \leq \ell$, with pairwise vertex disjoint stars disjoint to $4$, $5$, $6$, and $7$ (now $\{ 4,6 \}$ reappears as a path from $4$ to $6$, and thus $\gadget_{m,\ell}$ is in fact a proper subdivision of $\gadget$). See \Cref{fig:gadget} for an illustration.
\end{enumerate}

\begin{figure}[hbt]
    \centering{
    \scriptsize
  \begin{tikzpicture}[scale=1.75,baseline]%
{ [every path/.style = {line cap=round,line join=round}]
    \coordinate (r) at (0,-1.6);
    \coordinate (w) at (-1.6,0);
    \coordinate (t) at (0,1.6);
    \coordinate (v) at (1.6,0);
    \coordinate (s) at (2,-2);
    \coordinate (u) at (2,2);
    \coordinate (sb) at (-2,2);
    \coordinate (q) at (-2,0);
    \coordinate (ur) at (-2,-2);
    \coordinate (ql) at (2,0);
    \coordinate (qr) at (0,-2);
    \coordinate (x1) at (-1.2,2);
    \coordinate (x2) at (-0.4,2);
    \coordinate (xm1) at (1.2,2);

    \coordinate (c0) at (-1.3,0);
    \coordinate (d0) at (-0.5,0);
    \coordinate (a0) at (-0.8,0.4);
    \coordinate (b0) at (-0.8,-0.4);
    \coordinate (y0) at (-0.95,0.15);
    \coordinate (z0) at (-0.95,-0.15);
    
    \coordinate (cl) at (0.5,0);
    \coordinate (dl) at (1.3,0);
    \coordinate (al) at (0.8,0.4);
    \coordinate (bl) at (0.8,-0.4);
    \coordinate (yl) at (0.95,0.15);
    \coordinate (zl) at (0.95,-0.15);

    \fill[verylightgray] (t)--(w)--(r)--(v)--cycle;
    \fill[lightergray] 
    (u)--(s)--(ur)--(sb)--cycle 
    (t)--(w)--(r)--(v)--cycle
    (a0)--(d0)--(b0)--(c0)--cycle
    (al)--(dl)--(bl)--(cl)--cycle
    ;
    
    \draw[lightergray] (u)--(s)--(ur)--(sb);

    \draw (sb)--(x2);
    \draw[dotted] (x2)--(xm1);
    \draw (xm1)--(u);

    \draw (ur)--(sb);
    \draw (r)--(v);
    \draw (v)--(u);
    \draw (t)--(sb);
    \draw (t)--(v);
    \draw (t)--(w);
    \draw (w)--(q);
    \draw (w)--(ur);
    \draw (w)--(sb);
    \draw (r)--(ur);
    \draw (r)--(qr);
    \draw (s)--(r);
    \draw (s)--(v);
    \draw (ql)--(v);
    \draw (u)--(t);

    \draw (r)--(w);

    \draw (t)--(x1);
    \draw (t)--(x2);
    \draw (t)--(xm1);
    
    \draw (w)--(c0);

    \draw (a0)--(d0)--(b0)--(c0)--cycle;
    \draw (y0)--(d0)--(z0)--(c0)--cycle;
    \draw (t)--(a0)--(y0)--(z0)--(b0)--(r);
    \draw (t)--(c0)--(r);
    \draw (t)--(d0)--(r);
    
    \draw[dotted] (d0)--(cl);
    
    \draw (al)--(dl)--(bl)--(cl)--cycle;
    \draw (yl)--(dl)--(zl)--(cl)--cycle;
    \draw (t)--(al)--(yl)--(zl)--(bl)--(r);
    \draw (t)--(cl)--(r);
    \draw (t)--(dl)--(r);

    \draw (dl)--(v);

    \node[below left] at (r) {$6$};
    \node[below left] at (w) {$5$};
    \node[above] at (t) {$4$};
    \node[below right] at (v) {$7$};
    \node[below right,lightgray] at (s) {$3$};
    \node[above] at (u) {$x_m = 1$};
    \node[above] at (sb) {$3 = x_0$};
    \node[left] at (q) {$2$};
    \node[below left] at (ur) {$1$};
    \node[right,lightgray] at (ql) {$2$};
    \node[below,lightgray] at (qr) {$2$};

    \node[above] at (x1) {$x_1$};
    \node[above] at (x2) {$x_2$};
    \node[above] at (xm1) {$x_{m-1}$};

    \node[above left=0mm and -1mm] at (c0) {$c_1$};
    \node[above right] at (d0) {$d_1$};
    \node[right] at (a0) {$a_1$};
    \node[right] at (b0) {$b_1$};
    \node[above right=-1mm and .5mm] at (y0) {\tiny $y_1$};
    \node[below right=-1mm and .5mm] at (z0) {\tiny $z_1$};    
    \node[right] at($(y0)!1/2!(z0)$) {$e_1$};

    \node[above left] at (cl) {$c_l$};
    \node[above right=0mm and -1mm] at (dl) {$d_l$};
    \node[left] at (al) {$a_l$};
    \node[left] at (bl) {$b_l$};
    \node[above left=-1mm and .5mm] at (yl) {\tiny $y_l$};
    \node[below left=-1mm and .5mm] at (zl) {\tiny $z_l$};    
    \node[left] at($(yl)!1/2!(zl)$) {$e_l$};

    \node[above] at($(sb)!1/2!(x1)$) {$f_1$};
    \node[above] at($(x1)!1/2!(x2)$) {$f_2$};
    \node[above] at($(xm1)!1/2!(u)$) {$f_m$};

}
\end{tikzpicture}
  }
  \caption{The main gadget of the construction $\gadget_{m,\ell}$. \label{fig:gadget}}
\end{figure}
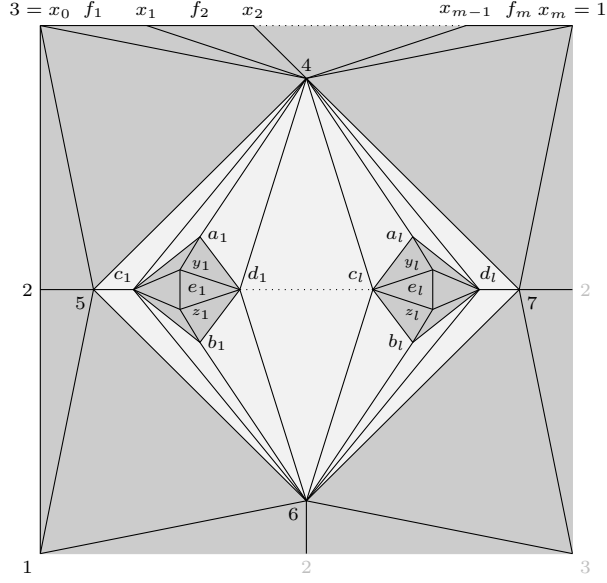

One key property of $\gadget_{m,\ell}$ is that we do not subdivide any faces of $\gadget$ near to the two available $3$-expansions. As a result, again, assuming that $\gadget_{m,\ell}$ is part of a larger complex $\complex$ where all free edges of $\gadget_{m,\ell}$ are glued to other triangles of $\complex$ outside of $\gadget_{m,\ell}$ and thus are not free, the triangles of $\gadget_{m,\ell}$ can be collapsed according to the following observation:

\begin{remark} 
Let $\complex$ be a two-dimensional simplicial complex such that $\gadget_{m,\ell}$ is a subcomplex whose vertices do not span any other faces of $\complex$ (i.e.,  $\gadget_{m,\ell}$ is an induced subcomplex of $\complex$), and $\complex \se \altcomplex $ where $\altcomplex$ is a $1$-complex.
Then, at least one of the following three statements holds true at some point in $\complex \se \altcomplex $, enabling us to eventually collapse away all the triangles of  $\gadget_{m,\ell}$.
\begin{enumerate}
\item one of the edges $f_i \in \gadget_{m,\ell}$ becomes free;
\item one of two $3$-expansions on $\gadget_{m,\ell}$ \emph{:} $\expansion{\{ 2,5,6 \}}{\{ 1,2,5,6 \}}$ or $\expansion{\{ 2,6,7 \}}{\{ 2,3,6,7 \}}$ is performed;
\item multiple expansions result in a complex in which all the triangles of $\gadget_{m,\ell}$ can be collapsed.
\end{enumerate} 
\end{remark}
In other words, if one of the edges $f_i \in \gadget_{m,\ell}$ does not become free at some point in $\complex \se \altcomplex $, then one is forced to use $3$-expansions (either directly on $\gadget_{m,\ell}$, or after performing additional expansions) to collapse away the triangles of $\gadget_{m,\ell}$. 

In \Cref{sec:hardness} we use this gadget to reduce an instance $A=(S,R,p)$  of \pMAS to \paraexptwo: Every sentence $s \in S$ is associated with one copy of $\gadget_{m,\ell}$, the edges $f_i$ correspond to implications $(U,s) \in R$, and the edges $e_j$ correspond to whenever $s \in U$ for some implication $(U,u) \in R$.

\section{Parametrized complexity of \paraexptwo}
\label{sec:pComplexity}

In this section, we first prove that \paraexptwo and \paraexp are \WP-hard by a reduction from \pMAS, a problem known to be \WP-complete. We then show that the two problems are also contained in \WP.

\subsection{\WP-hardness of expansion height problems}
\label{sec:hardness}

\begin{theorem}
 \label{thm:hardness}
\paraexptwo and \paraexp are \WP-hard problems.
\end{theorem}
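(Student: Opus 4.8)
The plan is to give a parametrized reduction from \pMAS, which is \WP-complete in the parameter $p$. Given an instance $A = (S, R, p)$ of \pMAS --- which, by \Cref{rem:restriction}, we may assume has no redundant sentences and no trivial implications --- I would build a $2$-complex $\complex$ together with the same parameter $p$, so that $\complex$ can be collapsed to a $1$-complex using at most $p$ expansions if and only if $A$ admits an axiom set of size at most $p$. The complex $\complex$ is assembled from one copy of the gadget $\gadget_{m,\ell}$ per sentence $s \in S$: for the gadget associated to $s$, the integer $m$ is the number of implications $(U,s) \in R$ concluding $s$, so that the free edges $f_1, \ldots, f_m$ are in bijection with these implications, and $\ell$ is the number of times $s$ occurs in a premise $U$ of some implication $(U,u) \in R$, so that the edges $e_1, \ldots, e_\ell$ record these occurrences. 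The individual gadgets are included as induced subcomplexes, so that the structural observation on $\gadget_{m,\ell}$ (the Remark) applies to each of them independently.

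The central feature of the construction is the gluing that encodes the implication relation $R$. For each implication $(U,s) \in R$, corresponding to a free edge $f_i$ in the gadget of $s$, I would attach triangles so that $f_i$ becomes free precisely when, for every $u \in U$, the corresponding edge $e_j$ in the gadget of $u$ has already been freed by collapsing that gadget. Thus the two ways of collapsing the triangles of the gadget of $s$ enumerated by the Remark acquire a logical meaning: either some $f_i$ becomes free, modelling the derivation of $s$ via $(U_i,s)$ once all of $U_i$ has been derived, or one of the two admissible $3$-expansions is performed, modelling the declaration of $s$ as an axiom. With this dictionary in place the forward direction is direct: given an axiom set $S_0$ with $|S_0| \le p$, I would first perform one $3$-expansion on the gadget of each $s \in S_0$ --- these being all of the expansions, so that the ordered variant \paraexp is handled simultaneously --- and then collapse the gadgets in the order in which the corresponding sentences enter the sets $S_1 \subseteq S_2 \subseteq \cdots$. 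Since $S_n = S$, every gadget is eventually erased and $\complex$ collapses to a $1$-complex using exactly $|S_0| \le p$ expansions.

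The harder direction, and the main obstacle, is the converse: from a simple-homotopy equivalence $\complex \se \altcomplex$ to a $1$-complex $\altcomplex$ using at most $p$ expansions I must extract an axiom set of size at most $p$. Here I would apply the Remark to each gadget: since $\altcomplex$ is one-dimensional, every gadget has all of its triangles collapsed at some point, and the Remark forces each gadget to be erased either because one of its edges $f_i$ becomes free (a \emph{derived} gadget) or because an expansion is spent on it (an \emph{axiom} gadget). Taking $S_0$ to be the set of sentences whose gadgets are of the latter type gives $|S_0| \le p$, and a careful reading of the collapse order --- in particular that a derived gadget for $s$ can only be erased after all gadgets for some premise $U_i$ with $(U_i,s) \in R$ have been erased --- shows that iterating the implication rule from $S_0$ recovers all of $S$, so $S_0$ is a genuine axiom set.

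The crux of this last step, and what the topological rigidity of the (modified) dunce hat is needed for, is to rule out ``clever'' sequences: one must verify that no single expansion can simultaneously serve to erase two distinct axiom gadgets, and that the auxiliary expansions permitted by case~3 of the Remark cannot lower the expansion count below the number of axiom gadgets. This is where the non-collapsibility of the dunce hat does the essential work --- each gadget genuinely requires a dedicated $3$-expansion unless one of its encoding edges $f_i$ is freed externally --- ensuring that the number of expansions is bounded below by $|S_0|$, so that the minimum expansion height equals the minimum axiom set size and the reduction is correct for both \paraexptwo and \paraexp.
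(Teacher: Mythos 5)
Your construction and the forward direction coincide with the paper's: one copy of $\gadget_{m,\ell}$ per sentence, the free edges $f_i^s$ glued to the edges $e_j^u$ of the premise gadgets (the paper realizes your ``attach triangles so that $f_i$ becomes free precisely when\dots'' by vertex identifications and uses \Cref{lem:pasting} to check that no unintended identifications arise --- a verification your sketch omits, though it is routine), and one $3$-expansion per axiom gives (c)$\implies$(a). The genuine gap is the step you yourself flag as the crux. You assert that ``each gadget genuinely requires a dedicated $3$-expansion unless one of its encoding edges $f_i$ is freed externally'' and attribute this to the non-collapsibility of the dunce hat, but that is not an argument: non-collapsibility only rules out erasing a single untouched gadget, with none of its $f_i$ free, using no expansions at all; it says nothing about case~3 of the remark in \cref{ssec:gadget}, in which expansions may be performed on faces created mid-sequence, far from any gadget, and could conceivably contribute to freeing triangles of several gadgets at once. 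The lower bound ``number of expansions $\geq |S_0|$'' is exactly the statement that must be proven, and per-gadget topological rigidity does not by itself yield it.

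The paper closes this gap by changing the unit of accounting from expansions to $3$-collapses, for which sharing is impossible by construction. Call a gadget \open when one of its original triangles is first removed. Until that moment every edge of a gadget triangle other than the $f_i^s$ has at least two triangle cofaces (expansions only \emph{add} cofaces), so the first removal is either a $2$-collapse through a free $f_i^s$ --- which, by the gluing, forces all premise gadgets of the corresponding implication to be already \open --- or a $3$-collapse. A $3$-collapse removes exactly one triangle, and each gadget triangle lies in exactly one gadget, so distinct gadgets first touched by $3$-collapses consume distinct $3$-collapses; and since the initial and final complexes contain no tetrahedra, the number of $3$-collapses equals the number of $3$-expansions, hence is at most $p$. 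Taking $S_0$ to be the sentences whose gadgets are touched by a $3$-collapse, an induction over prefixes of the move sequence (the paper's \Cref{cl:concise}) shows that every sentence is implied by $S_0$, with no separate analysis of where the expansions occur. Note also that your phrase ``after all gadgets for some premise $U_i$ have been erased'' should read ``touched'': freeness of $f_i^s$ requires only the removal of the two premise-gadget triangles at each $e_j^u$, and the derivability of the premise sentences is supplied by the induction hypothesis, not by requiring their gadgets to be fully erased first. Without this collapse-counting device (or an equivalent one), your final step does not go through.
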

The following lemma is used to assemble the gadgets in our reduction into a simplicial complex $\complex$.

\begin{lemma}[Munkres, \protect{\cite[Lemma 3.2]{MR755006}}]
\label{lem:pasting}
Let $C$ be a finite set, let $\complex$ be a simplicial complex with set of vertices $V$, and let $f: V \to C$ be a surjective map associating to each vertex of $\complex$ a color from $C$. The coloring $f$ extends to a simplicial map $g: \complex \to  \complex_f$ where $\complex_f$ has vertex set $C$ and is obtained from $\complex$ by identifying vertices with equal color.
  
  If for all pairs $v,w \in V$, $f(v) = f(w)$ implies that their stars $\str_{\complex} (v)$ and $\str_{\complex} (w)$ are vertex disjoint, then, for all faces $\face,\smallface \in \complex$ we have that 
\begin{itemize}
\item
$\face$ and $g(\face)$ have the same dimension, and 
\item
$g(\face) = g(\smallface)$ implies that either $\face = \smallface$ or $\face$ and $\smallface$ are vertex disjoint in $\complex$. 
\end{itemize}
\end{lemma}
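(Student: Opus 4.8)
The plan is to observe first that the map $g$ is forced to be $g(\face) = f(\face) = \{f(v) : v \in \face\}$, since by construction $\complex_f$ is the complex whose faces are exactly the color-sets of faces of $\complex$; this already makes $g$ simplicial, so the entire content of the lemma lies in the two bulleted conclusions. I would derive both from a single reformulation of the hypothesis: if $v \neq w$ and $f(v) = f(w)$, then $\str_{\complex}(v)$ and $\str_{\complex}(w)$ share no vertex. In particular, no face of $\complex$ can contain two distinct vertices of the same color, and no vertex of $\complex$ can lie simultaneously in a face with $v$ and in a face with $w$.

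For the dimension statement, I would show that the restriction of $f$ to any face $\face$ is injective. Indeed, if $v, w \in \face$ with $v \neq w$ and $f(v) = f(w)$, then $\face$ itself lies in both $\str_{\complex}(v)$ and $\str_{\complex}(w)$, so its vertices are common to both stars, contradicting the hypothesis. Injectivity of $f|_\face$ gives $|g(\face)| = |f(\face)| = |\face|$, and hence $\dim g(\face) = \dim \face$.

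For the second statement, I would assume $g(\face) = g(\smallface)$, that is $f(\face) = f(\smallface)$, and suppose in addition that $\face$ and $\smallface$ share a vertex $u$; the goal is to conclude $\face = \smallface$. By the dimension step, both $f|_\face$ and $f|_\smallface$ are bijections onto the common color set $f(\face) = f(\smallface)$. Now take any $a \in \face$; there is a unique $b \in \smallface$ with $f(b) = f(a)$. If $a \neq b$, then $f(a) = f(b)$ forces $\str_{\complex}(a)$ and $\str_{\complex}(b)$ to be vertex disjoint; but $u$ lies in the face $\face \ni a$ and in the face $\smallface \ni b$, so $u$ is a common vertex of the two stars, a contradiction. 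Hence $a = b \in \smallface$, proving $\face \subseteq \smallface$, and by symmetry $\face = \smallface$.

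The main obstacle, and really the only subtle point, is the use of the shared vertex $u$ in the second part: it is what upgrades ``same color'' into ``same vertex''. The disjoint-stars hypothesis is precisely strong enough to rule out a color collision between a vertex of $\face$ and a distinct vertex of $\smallface$ once a common vertex is present, because that common vertex would witness a nonempty intersection of the two stars. I would be careful to emphasize that the hypothesis concerns vertex-disjointness of the \emph{stars}, not merely non-adjacency of $v$ and $w$, since it is exactly the presence of the third vertex $u$ in both stars that drives the contradiction.
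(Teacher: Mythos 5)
Your proof is correct. Note that the paper itself gives no proof of this lemma at all --- it is quoted verbatim from Munkres \cite[Lemma 3.2]{MR755006} and used as a black box to control the gluing of the gadgets $\gadget_{m,\ell}$ --- so there is no in-paper argument to compare against; what you have written is essentially the standard proof one would find in Munkres. Both of your steps are sound: injectivity of $f$ on each face $\face$ follows because two equally colored vertices of $\face$ would make $\face$ itself a common face of their stars, giving the dimension statement; and for the second bullet, a shared vertex $u \in \face \cap \smallface$ together with a color collision $f(a)=f(b)$, $a \in \face$, $b \in \smallface$, $a \neq b$, would place $u$ in both $\str_{\complex}(a)$ and $\str_{\complex}(b)$, contradicting the hypothesis, so the induced bijections force $\face \subseteq \smallface$ and, by equal cardinality (or symmetry), $\face = \smallface$. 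You also correctly handled the one reading subtlety: the hypothesis must be interpreted for \emph{distinct} pairs $v \neq w$ (for $v = w$ it is vacuously false as stated, since $\str_{\complex}(v)$ meets itself), and your argument uniformly covers the degenerate cases $a = u$ or $b = u$, since $u$ lies in both stars regardless. The only cosmetic remark: your opening observation that $g(\face) = f(\face)$ is forced and automatically simplicial is worth keeping explicit, since it is exactly why the lemma's content reduces to the two bullets.
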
 
\Cref{lem:pasting} provides a way of gluing faces of a simplicial complex by a simplicial quotient map obtained from vertex identifications, and tells us when this gluing does not create unwanted identifications.

\begin{proof}[Proof of \Cref{thm:hardness}]

  We want to reduce \pMAS to \paraexptwo.

  Fix an instance $A = (S,R,p)$ of \pMAS such that every sentence $s \in S$ is subject to at least one implication $(U,s) \in R$ and such that $(U,s)  \in R$ implies $s \not \in U$. By \Cref{rem:restriction}, this is not a restriction, since every instance of \pMAS can be reduced to such an instance in polynomial time. For every sentence $s \in S$, take a copy $\gadget_s$ of the gadget $\gadget_{m,\ell}$ to model $s$, where $\ell \geq 0$ is the number of implications $(U,u) \in R$ with $s \in U$ and $m \geq 1$ is the number of implications $(U,s) \in R$. Thus, for all values $\ell \geq 0$ and $m \geq 1$ the gadget $\gadget_{m,\ell}$ is a simplicial complex without any unintended identifications. Denote the free edges of $\gadget_s$ by $f_i^s = \{ x_{i-1}^s,x_{i}^s \}$, $1 \leq i \leq m$, and its edges of type $e_j$ by $e_j^s = \{ y_j^s,z_j^s \}$, $1 \leq j \leq \ell$. 

For a fixed $s \in S$, endow the set of implications $(U,s) \in R$ of $s$ with an arbitrary order $(U_1,s), \ldots , (U_m,s)$. Similarly, for a fixed $u \in S$, order the set of implications in $R$ containing $u$ arbitrarily as $(U^1,s^1), \ldots , (U^{\ell},s^{\ell})$.
Now for every $(U_i,s) \in R$ and every $u \in U_i = U^j$ (i.e., $s^j = s$), glue the edge $f_i^s = \{ x_{i-1}^s,x_{i}^s \}$ of the gadget $\gadget_s$ to the edge $e_j^u = \{ y_{j}^u , z_j^u \}$ of $\gadget_u$ by identifying $x_{i-1}^s$ with $y_j^u$ and $x_{i}^s$ with $z_j^u$. 

  Performing these identifications for all implications in $R$ yields a complex, which we denote by $\complex$.
Note that, fixing $s \in S$, and $0 \leq i \leq m$, the only vertices to which $x_i^s$ can possibly be identified to in $\complex$ are $y_j^u, z_j^u$ ($ 1 \leq j \leq \ell$).

  More precisely, using the orderings from above for vertex $x_i^s$, let $(U_{i},s)$ and $(U_{i+1},s)$ be the $i$-th and $(i+1)$-st implication of $s$ in $R$ (if $i \in \{ 0, m\}$ there is only one implication to consider) and denote their sentences by $u_{1}, \ldots , u_{r}$ and $u^{1}, \ldots , u^{t}$, where $r = | U_i |$ and $t = |U_{i+1}|$. Moreover, let $U_{i}$ (resp.\@ $U_{i+1}$) be the $j_{f}$-th (resp.\@ $j^{g}$-th) implication where the sentence $u_{f}$ (resp.\@ $u^{g}$) occurs, for $ 1 \leq f \leq r$ (resp.\@ $ 1 \leq g \leq t$). 
Then $x_i^s$ is identified with $z_{j_{f}}^{u_{f}}$ ($ 1 \leq f \leq r$) and $y_{j^{g}}^{u^{g}}$ ($ 1 \leq g \leq t$).

Now since every fixed edge of type $e_j^u$ is only identified with 
one edge of type $f_i^s$, those vertices are not identified with any other vertices of $\complex$. 
Since, by construction, the set of the vertex stars of $z_{j_{f}}^{u_{f}}$ ($ 1 \leq f \leq r$), $y_{j^{g}}^{u^{g}}$ ($ 1 \leq g \leq t$), and $x_i^s$ are pairwise vertex disjoint, we can apply \Cref{lem:pasting} to ensure that no unwanted identifications occur in building up $\complex$. 
In particular, every gadget $\gadget_s$ is a subcomplex of $\complex$ via the canonical isomorphism given by the gluing map.

We now show that the following three statements are equivalent for our complex $\complex$:
\begin{enumerate}[(a)]
  \item there exists a simple-homotopy equivalence turning $\complex$ into a $1$-dimensional complex using first at most $p$ expansions, followed by a sequence of only collapses,
  \item there exists a simple-homotopy equivalence $\complex \se \altcomplex$ turning $\complex$ into a $1$-dimensional complex $\altcomplex$ using at most $p$ expansions, 
  and
  \item there exists an axiom set $S_0 \subset S$ for $A = (S,R,p)$ using at most $p$ elements.
\end{enumerate}

We trivially have that (a)$ \implies$(b).

In order to show that (c)$ \implies$(a), assume that there exists an axiom set $S_0 \subset S$ of size $p$, and perform one $3$-expansion on each gadget $\gadget_u$ with $u\in S_0$. As described in \cref{ssec:gadget}, these expansions admit all triangles of these gadgets to collapse. This, in turn, frees all edges $f_i^s$ where $s \in S$ has an implication $(U,s) \in R$ with $U \subset S_0$. Consequently, all triangles of such gadgets $\gadget_s$ can be collapsed. Since $S_0$ is an axiom set, repeating this process eventually collapses away all tetrahedra and triangles, leaving a $1$-complex.

  In order to show that (b)$ \implies$(c), we start with a few definitions. For a complex $\complex'$ with $\complex \se \complex'$, we say that a gadget $\gadget_s \subseteq \complex$ is \emph{\open} with respect to a simple-homotopy sequence $\SCC:\complex \se \complex'$ if at some point in the simple-homotopy sequence one of the triangles of $\gadget_s$ is removed. Otherwise $\gadget_s$ is said to be \emph{\closed}. Note that even if all triangles of $\gadget_s$ are present in $\complex'$, $\gadget_s$ might still be \open. Being \open or \closed is a property of the sequence $\SCC:\complex \se \complex'$, not of the complex $\complex'$.    
  
   We build the axiom set $S_0 \subset S$ for $A = (S,R,p)$ in the following way: A sentence $s \in S$ is in $S_0$ if and only if a triangle in $\gadget_s$ is removed by a $3$-collapse of the given simple-homotopy sequence $\SCC:\complex \se \altcomplex$. 
   We first inductively prove a claim about 
   \[S^k = \{ s\in S \mid \gadget_s \text{ is touched by a $3$-collapse in the first $k$ moves of } \complex \se \altcomplex \}. \]

\begin{claim} \label{cl:concise}
For $s \in S$, if the gadget $\gadget_s$ is \open by the first $k$ elementary moves in $\SCC:\complex \se \altcomplex$, then $s$ is implied by sentences in $S^k$.
\end{claim}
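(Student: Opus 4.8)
The plan is to prove the claim by induction on $k$, with the statement quantified over all $s \in S$ at once. The base case $k=0$ is vacuous, as no gadget can be \open before any move is performed. For the inductive step I would first dispose of the easy situation: if a triangle of $\gadget_s$ is already removed within the first $k-1$ moves, then the induction hypothesis gives that $s$ is implied by $S^{k-1}$, and since $S^{k-1}\subseteq S^k$ this immediately gives that $s$ is implied by $S^k$. Hence the only case requiring work is when move $k$ is the \emph{first} move that removes a triangle $T$ of $\gadget_s$; in particular every original triangle of $\gadget_s$ is still present immediately before move $k$. Since move $k$ removes a triangle, it must be either a $3$-collapse or a $2$-collapse, and I would treat these separately.

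The $3$-collapse case is immediate: there $T$ is removed as the free triangle of a tetrahedron, so $\gadget_s$ is touched by a $3$-collapse within the first $k$ moves, whence $s\in S^k$ and $s$ is trivially implied by $S^k$. The substance of the argument is the $2$-collapse case, where $T$ is removed through a free edge $e$, that is, an edge incident to exactly one triangle (namely $T$) at that moment. The three edges of $T$ are all original edges of $\gadget_s$, and since no triangle of $\gadget_s$ has yet been removed, every original triangle of $\gadget_s$ incident to such an edge is still present. I would then invoke the structural fact that in the gadget $\gadget_s$ the only edges incident to a single triangle of $\gadget_s$ are the boundary edges $f_i^s$, while every other edge --- in particular every edge of type $e_j^s$ --- is incident to at least two triangles of $\gadget_s$. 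Consequently $e$ cannot be one of the latter (it would then lie in at least two triangles and fail to be free), forcing $e = f_i^s$ for some $i$.

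It then remains to translate ``$f_i^s$ is free'' into a statement about the neighbouring gadgets. For $f_i^s$ to be incident to the single triangle $T$, all of its other incident triangles must have been removed; by the gluing these include, for each $u\in U_i$, the two triangles of $\gadget_u$ incident to the edge $e_j^u$ that is identified with $f_i^s$. These triangles are present at the start and absent at move $k$, so each was removed at some earlier move, meaning each $\gadget_u$ with $u\in U_i$ is \open within the first $k-1$ moves. By the induction hypothesis every $u\in U_i$ is then implied by $S^{k-1}\subseteq S^k$, and since $(U_i,s)\in R$ it follows that $s$ is implied by $S^k$, closing the induction.

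The step I expect to be the main obstacle is making the edge-counting argument robust against the expansions that may be interspersed with the collapses in $\SCC$, since a priori an expansion could attach fresh triangles along edges of $\gadget_s$ and upset the incidence count. The point that rescues the argument is a monotonicity observation: expansions only ever \emph{increase} the number of triangles incident to an edge, so an edge that is free at move $k$ can carry no expansion-created triangle and must be incident to exactly one triangle of $\gadget_s$, which is precisely what forces $e=f_i^s$; the same remark guarantees that the relevant original triangles of the $\gadget_u$ really were removed rather than merely hidden. With this in place the argument goes through verbatim whether or not expansions precede move $k$.
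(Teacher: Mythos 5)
Your proof is correct and follows essentially the same induction as the paper's: after dispatching the trivial cases (gadget already \open, or \open by a $3$-collapse at move $k$, whence $s \in S^k$), the main case argues that a first-touching $2$-collapse must occur through a free edge $f_i^s$, so all neighbouring gadgets $\gadget_u$ with $u \in U_i$ were \open within the first $k-1$ moves, and the induction hypothesis together with the implication $(U_i,s) \in R$ closes the argument. Your explicit justifications --- that the only edges of $\gadget_s$ with a single incident triangle are the $f_i^s$, and that expansion-created triangles cannot interfere with freeness at move $k$ --- simply fill in a step the paper asserts without proof (``one of the edges $f_i^s$ must be free in $\complex'$'').
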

\begin{claimproof}
  First note that all gadgets are \closed in $\complex$ and $S^0 = \emptyset$.

By induction hypothesis, if a gadget $\gadget_s$ is \open by one of the first $k-1$ moves in $\SCC:\complex \se \altcomplex$, then $s$ is implied by sentences in $S^{k-1}$. 

The induction claim is trivially true if $\gadget_s$ is \open in the first $k-1$ moves, or if $\gadget_s$ is \open by a $3$-collapse in the $k$-th move ($s \in S^{k} \setminus S^{k-1}$), causing the sentence~$s$ to be included in $S_0$.

So, suppose that this is not the case. That is, suppose that $\gadget_s$ is \closed in the length~$k-1$ prefix $\SCC':\complex \se \complex'$ of $\SCC:\complex \se \altcomplex$ and \open by a $2$-collapse in the $k$-th move.
This implies that $S^k = S^{k-1}$ and that $\gadget_s$ is a subcomplex of $\complex'$ and one of the edges $f_i^s$ must be free in $\complex'$. 
Now let $\gadget_{u_1}, \gadget_{u_2},\dots, \gadget_{u_q}$ be the set of other gadgets containing triangles glued to $f_i^s$ in the original complex~$\complex$ (that is, there is an implication $(\{u_1, u_2, \dots , u_q\},s) \in R$). Since none of these triangles are present in $\complex'$, all gadgets $\gadget_{u_1}, \gadget_{u_2},\dots, \gadget_{u_q}$ must be \open in $\SCC':\complex \se \complex'$. Thus, either they were touched by a $3$-collapse and their corresponding sentences are part of $S^{k-1}$, or they were touched by a $2$-collapse and, by the induction hypothesis, their corresponding sentences are implied by sentences in $S^k$. It follows that $s$ is implied by sentences in $S^k = S^{k-1}$, proving the claim.
  \end{claimproof}

By assumption, $\complex$ is simple homotopy equivalent to a 1-complex $\altcomplex$. That is, $\SCC':\complex \se \altcomplex$ eventually removes all triangles from $\complex$. Hence, every sentence $s\in S$ is touched as a result of a $2$-collapse or a $3$-collapse. Let $m$ be the number of elementary moves needed to reach $\altcomplex$ starting from $\complex$.
Then, by \Cref{cl:concise}, $S^m = S_0$ is the desired axiom set. Also, since a sentence $s$ is included in $S_0$ only if a triangle belonging to gadget $\gadget_s$ is removed as part of a $3$-collapse, and since a triangle belonging to gadget $\gadget_s$ does not belong to any other gadget $\gadget_u$ for $u \neq s$, $S_0$ cannot contain more elements than the number of $3$-collapses (and hence $3$-expansions).

Finally, we infer \WP-hardness of \paraexptwo and \paraexp from the above equivalence and the \WP-hardness of \pMAS~\cite[p. 473]{MR1656112}.
\end{proof}

\subsection{\WP-membership of %
\paraexptwo}
\label{sec:membership}

We now show that \paraexp and \paraexptwo are in \WP by describing suitable nondeterministic algorithms for deciding both problems.
We begin with a well-known fact about checking collapsibility of $2$-complexes. 

\begin{proposition}[Tancer~\cite{MR3439259}, Proposition 5]
\label{prop:tancer}
Let $\complex$ be a $2$-complex that collapses to a $1$-complex $\altcomplex$ and to another $2$-complex $M$. Then $M$ also collapses to a $1$-complex.
\end{proposition}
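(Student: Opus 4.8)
The plan is to reduce everything to a single elementary collapse and then induct. Write the collapse $K \searrow M$ as a chain of single elementary collapses $K = K_0 \searrow K_1 \searrow \cdots \searrow K_N = M$. Since each step only deletes faces, every $K_i$ contains $M$ and is therefore two-dimensional. Hence it suffices to prove the following single-step claim: if $K_i$ is a two-dimensional complex that collapses to a $1$-complex and $K_i \searrow K_{i+1}$ is a single elementary collapse, then $K_{i+1}$ also collapses to a $1$-complex. Applying this claim $N$ times, starting from the erasable complex $K_0 = K$, yields that $M = K_N$ is erasable.

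To handle the single step cleanly, I would first record a combinatorial reformulation of erasability for a two-dimensional complex. A triangle is top-dimensional, so the only elementary move that can delete it is a $2$-collapse pairing it with one of its edges; consequently a $1$-collapse can only remove a free vertex together with a dangling edge contained in no triangle. From this I would extract the characterization: a $2$-complex collapses to a $1$-complex if and only if its triangles admit an ordering $\tau_1, \dots, \tau_n$ such that each $\tau_j$ has an edge contained in no later triangle $\tau_{j'}$ with $j' > j$. The forward direction reads off such an order from the order in which the triangles are removed in any collapsing sequence: the edge paired with $\tau_j$ is free at that moment, hence lies in no not-yet-removed triangle. The backward direction performs the $2$-collapses in the given order; the only thing to check is that the witnessing edges are pairwise distinct and still present when needed, which follows because a witness for $\tau_j$ lies in no later triangle.

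Given this characterization, the single step splits into two cases according to the type of the collapse $K_i \searrow K_{i+1}$, say removing a free face $\sigma$ with unique coface $\tau$. If it is a $1$-collapse, then $\sigma$ is a free vertex and $\tau$ a dangling edge contained in no triangle; since this removes neither a triangle nor any edge lying in a triangle, the triangle order witnessing erasability of $K_i$ works verbatim for $K_{i+1}$. If it is a $2$-collapse, then $\tau$ is a triangle and $\sigma$ a free edge, so $\sigma$ is contained in the single triangle $\tau$; here I would simply delete $\tau$ from the witnessing order of $K_i$. The crucial point is that no surviving triangle used $\sigma$ as its witness: any witness edge of a triangle other than $\tau$ is an edge of that triangle, and since $\sigma$ lies only in $\tau$ it cannot be such an edge. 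Hence every remaining triangle keeps its witnessing edge, which is still present and still lies in no later triangle in the shortened order, so $K_{i+1}$ satisfies the characterization and is erasable.

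The main obstacle I anticipate is precisely the bookkeeping in the $2$-collapse case: one must verify that deleting the pair $(\tau, \sigma)$ neither invalidates the witnessing edges of the remaining triangles nor removes an edge on which some surviving triangle relies. The whole argument hinges on the fact that a free edge belongs to exactly one triangle, which decouples the removed pair from the witnesses of the others; checking this, together with the observation that removing triangles can only make the ``contained in no later triangle'' condition easier to satisfy, is the heart of the proof.
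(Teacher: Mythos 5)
Your proposal is correct, but there is nothing in the paper to compare it against: \Cref{prop:tancer} is imported verbatim from Tancer's paper (his Proposition~5), and the authors give no proof of their own. Judged on its own merits, your argument is sound and complete. The reduction to a single elementary collapse is legitimate (each intermediate complex contains $M$ and so stays $2$-dimensional), and your ordering characterization of erasability is correct in both directions; crucially, you supply the two delicate verifications on which it rests, namely that the witness edges are pairwise distinct (if $\sigma_i = \sigma_j$ with $i < j$, then $\sigma_i$ would lie in the later triangle $\tau_j$, contradicting its defining property), and that in the $2$-collapse case the removed free edge lies in exactly one triangle, so it cannot serve as the witness of any surviving triangle, while deleting that triangle from the order only shrinks the set of ``later triangles'' each remaining witness must avoid. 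Your static characterization is in substance the same combinatorial fact that underlies the usual exchange-style proof of this proposition (reorder the $2$-collapses so that an arbitrary free edge--triangle pair is removed first, then check every later witness edge is still free at its turn); packaging it as an if-and-only-if has the added benefit of immediately justifying the greedy procedure the paper invokes in \Cref{rem:greedy}, since $1$-collapses never change the set of triangles nor which edges have triangle cofaces, so erasability depends only on the existence of such a triangle ordering, which your one-step lemma shows is preserved under arbitrary elementary collapses.
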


\begin{remark} \label{rem:greedy}
The proposition above implies that we can collapse an input $2$-complex $\complex$ greedily until no more $2$-collapses are possible, and if $\complex$ collapses to a $1$-complex $\altcomplex$, the algorithm is guaranteed to terminate with a $1$-complex as well.
\end{remark}

\begin{theorem}
 \label{thm:inWPordered}
 \paraexp  is in \WP.
\end{theorem}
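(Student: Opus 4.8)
The plan is to exhibit a $\rho$-restricted nondeterministic Turing machine that decides \paraexp, where the parameter is $\rho(\complex,p) = p$. Recall that membership in \WP\ requires that the machine run in time $f(p)\cdot q(|\complex|)$ for some computable $f$ and polynomial $q$, while making at most $h(p)\cdot \log|\complex|$ nondeterministic steps. The key structural feature I want to exploit is that in \paraexp\ the expansions all come \emph{first}, so the entire certificate of a yes-instance is the description of at most $p$ expansions; once these are fixed, the remaining process is a deterministic greedy collapse justified by \Cref{prop:tancer} and \Cref{rem:greedy}.

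First I would bound the nondeterministic content of guessing a single expansion. An $m$-expansion $\expansion{\smallface}{\face}$ with $m \in \{2,3\}$ is determined by naming the new face $\face$ (and its free facet $\smallface$); since all $m$-expansions with $m>1$ leave the vertex set unchanged, each such $\face$ is a subset of the existing vertex set $V$ of bounded cardinality ($|\face| \le 4$), so there are at most $|V|^4 \le |\complex|^{O(1)}$ choices for each expansion. Hence each expansion can be selected using $O(\log|\complex|)$ nondeterministic bits, and guessing the entire length-$\le p$ prefix of expansions costs at most $O(p\log|\complex|)$ nondeterministic steps. This is exactly the form $h(p)\cdot\log|\complex|$ required by the definition of \WP, with $h(p) = O(p)$. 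I must also verify that each guessed expansion is \emph{valid} at the moment it is applied --- that is, that all codimension-$1$ faces of $\face$ except $\smallface$ are already present --- which is a deterministic polynomial-time check.

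Next I would describe the deterministic tail of the computation. After applying the (at most $p$) guessed expansions in sequence to $\complex$, obtaining a complex $\complex'$ of dimension at most $3$ and size polynomially larger than $|\complex|$ (since $p$ expansions add at most $2p$ faces), the machine greedily performs elementary collapses until none remain, and accepts if and only if the resulting complex is $1$-dimensional. The correctness of this greedy step is precisely the content of \Cref{prop:tancer} and \Cref{rem:greedy}: if \emph{some} collapsing sequence takes $\complex'$ down to a $1$-complex, then greedy collapsing is guaranteed to reach a $1$-complex as well. Thus the machine accepts along some run if and only if there is a choice of at most $p$ expansions after which $\complex'$ collapses to a $1$-complex, which is exactly the \paraexp\ condition. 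Greedy collapsing terminates in polynomially many steps and each step is polynomial-time, so the whole deterministic phase runs in time $p\cdot|\complex|^{O(1)}$.

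The main obstacle I anticipate is a subtle point hidden in \Cref{prop:tancer}: as stated, it concerns collapsing a \emph{$2$}-complex, whereas after $3$-expansions the intermediate complex $\complex'$ is genuinely $3$-dimensional. I would need to argue that the greedy strategy still behaves correctly in the presence of tetrahedra --- for instance by first collapsing away all $3$-dimensional faces (each $3$-expansion $\expansion{\smallface}{\face}$ can be immediately undone or resolved by a $3$-collapse freeing a triangle, as illustrated for the gadget in \Cref{ssec:gadget}) to return to a $2$-complex, and only then invoking \Cref{prop:tancer} in its stated form. Making this reduction to the $2$-dimensional confluence statement airtight --- confirming that greedily collapsing the $3$-faces first does not destroy a potential collapse to a $1$-complex --- is the step I expect to require the most care; everything else is bookkeeping to confirm the resource bounds match the definition of \WP.
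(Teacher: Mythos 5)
Your deterministic tail is where the argument breaks. \Cref{prop:tancer} and \Cref{rem:greedy} are statements about $2$-complexes only, and --- as you yourself suspect --- the confluence they express genuinely fails once tetrahedra are present: greedy collapsing in dimension $3$ is not order-independent, so your proposed repair (greedily collapse away all $3$-faces first, then invoke \Cref{prop:tancer}) is unsound. Concretely, take the dunce hat $\dunce$ with $p=1$, a yes-instance of \paraexp via $\expansion{\{2,7,8\}}{\{1,2,7,8\}}$. Immediately after this expansion the triangle $\{2,7,8\}$ is itself a free face of the new tetrahedron (in fact all four facets are free), so nothing prevents a deterministic greedy phase from performing $\collapse{\{2,7,8\}}{\{1,2,7,8\}}$, which undoes the expansion and returns $\dunce$, a complex with no free faces at all; that run halts with a $2$-dimensional complex and rejects. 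Since all of your machine's nondeterminism is spent on the expansions, there is no other accepting run for that guess, so the machine can err on yes-instances. The paper avoids exactly this trap by putting the problematic choices into the certificate: it first proves that in any witness sequence the number of $d$-collapses equals the number of $d$-expansions for every $d>2$ (hence there are $q\le p$ such collapses), and that the collapses can be reordered so that, for each $d$, all collapses of dimension $d+1$ precede those of dimension $d$; the machine then guesses the $p$ expansions \emph{and} these $q$ higher-dimensional collapses, obtaining a genuine $2$-complex $\complex'$, and only on $\complex'$ is the greedy erasability test of \Cref{rem:greedy} applied.

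A second, smaller gap is your restriction of the guessed expansions to $m\in\{2,3\}$ with $|\face|\le 4$. \paraexp places no such restriction: $1$-expansions enlarge the vertex set, and after several $3$-expansions a $4$-expansion can become available, and so on; asserting that expansions of dimension at most $3$ always suffice is an Andrews--Curtis-flavoured claim the paper never makes (indeed it studies the $3$-expansion-only variant \dshetwo as a separate problem). The paper instead observes that any simplex occurring in the sequence has at most $n+p$ vertices and that the dimension of faces involved in moves is $O(p)$, so each move costs $O(p\log(n+p))$ bits and the whole certificate of $p+q\le 2p$ moves costs $O(p^2\log(n+p))\le f(p)\log n$ nondeterministic bits, still within the \WP budget. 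This part of your accounting is easily repaired; the deterministic greedy treatment of the $3$-collapses is the substantive flaw.
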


\begin{proof}
Let $\complex$ be a simplicial complex with $n$ simplices. First, note that if there exists a simple homotopy sequence $\SCC$ taking $\complex$ to a $1$-complex with $p$ expansions that all come at the beginning of the sequence, then there also exists a simple homotopy sequence $\SCC_\bbm$ taking  $\complex$ to a $1$-complex where $p$ expansions are followed by collapses such that, for each~$d$, all collapses of dimension $d+1$ are executed before collapses of dimension $d$. 
This follows from observing that, for any two $d$-collapses $\collapse{\sigma}{\tau}$ and $\collapse{\alpha}{\beta}$, if the $d$-collapse $\collapse{\sigma}{\tau}$ is executed before the $d$-collapse $\collapse{\alpha}{\beta}$ in $\SCC$, then the same can be carried out in $\SCC_\bbm$.
Also, in any simple homotopy sequence $\SCC$ that takes $\complex$ to a $1$-complex, for every $d>2$, the number of $d$-expansions equals the number of $d$-collapses in $\SCC$. This follows from a simple inductive argument starting with highest dimensional moves. 

 Denoting the total number of $d$-collapses, $d>2$, in $\SCC$ by $q\leq p$, it follows that, if there exists a simple homotopy sequence $\SCC$ with $p$ expansions that come at the beginning, then there exists a simple homotopy sequence $\SCC_\bbm$ with $p$ expansions in the beginning followed by $q$ collapses that gives rise to a $2$-complex $\complex'$ with $O(n^3)$ faces. The faces can be as many as $O(n^3)$ since $\bbm$ does not guess any $2$-collapses. Furthermore, if $\KCC$ is erasable through the simple homotopy sequence $\SCC$, then $\complex'$ is also erasable, once again, because the $2$-collapses of $\SCC$ can be carried out in the same order in $\SCC_\bbm$. Hence, the non-deterministic Turing machine $\bbm$ can now be described as follows:
 \begin{enumerate}
 \item Guess $p$ expansions and $q$ collapses non-deterministically to obtain a complex $\complex'$.
 \item Deterministically check if $\complex'$ is erasable.
 \end{enumerate}
By~\cref{rem:greedy}, erasability of $\complex'$ can be deterministically checked in time polynomial in $n$. 

Since any simplex in the desired simple homotopy sequence has at most $n+p$ vertices, the number of bits required to encode a single vertex is $O( \log (n+p))$. Also, because the dimension of the faces involved in expansions and collapses is certainly in $O(p)$, and since an expansion or a collapse can be fully described by a pair of simplices, the number of bits required to encode an expansion or a collapse is $O(p\log (n+p))$. Hence, in order to guess $p+q$ moves, it suffices for $\bbm$ to guess $O(p^2\cdot\log (n+p))$ bits in total since $q\leq p$. Now, assuming $n,p \geq 2$, we have
\[
p^2 \log (n+p) \leq p^2 \log(np) = p^2 \log(n) + p^2 \log(p)  
		      \leq p^2 (1+\log(p)) \log(n).
\]
Hence, for sufficiently large $n$ and $p$, the number of bits guessed by  $\bbm$ is bounded by a function of the form $f(p)\log n$.
Thus, $\bbm$ is a $p$-restricted Turing machine, and \paraexp  is in \WP.
\end{proof}

\begin{theorem}
 \label{thm:inWP}
 \paraexptwo  is in \WP.
\end{theorem}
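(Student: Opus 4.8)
The plan is to design a $p$-restricted nondeterministic Turing machine $\bbm$ for \paraexptwo by mirroring the proof of \Cref{thm:inWPordered}, the only difference being that expansions may now be interspersed with collapses. As there, I call a move \emph{essential} if it is an expansion (of any dimension) or a collapse of dimension at least three. The counting argument used in \Cref{thm:inWPordered} applies verbatim to an arbitrary, not necessarily ordered, sequence $\SCC\colon \complex \se \altcomplex$ into a $1$-complex: for every $d>2$ the number of $d$-expansions equals the number of $d$-collapses, so there are at most $2p$ essential moves in total. Moreover, every triangle destroyed by a $2$-collapse was created either in $\complex$ or by one of the essential moves that introduce triangles, so the whole sequence has length polynomial in $n$. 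The obstruction to membership is therefore not the length of the sequence but the number of \emph{nondeterministic} choices, and these I intend to confine to the essential moves.

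Concretely, $\bbm$ guesses only the list of essential moves together with their relative order. Each such move is a pair of simplices of dimension $O(p)$ drawn from at most $n+p$ vertices, hence encodable in $O(p\log(n+p))$ bits, so the whole list takes $O(p^2\log(n+p))$ bits plus $O(p\log p)$ bits to fix the order. All remaining moves are $2$-collapses, which $\bbm$ supplies deterministically: between two consecutive guessed moves it collapses greedily, as justified by \Cref{rem:greedy}, with the single caveat that it refuses to collapse any edge or triangle lying in the horn $\horn(\face,\smallface)$ of a guessed expansion $\expansion{\smallface}{\face}$ that has not yet been performed. After executing all guessed moves, $\bbm$ collapses greedily one final time and accepts if and only if the result has dimension at most one; by \Cref{rem:greedy} this last check is deterministic and runs in time polynomial in $n$. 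Exactly as in \Cref{thm:inWPordered}, for $n,p\geq 2$ one has $p^2\log(n+p)\leq p^2(1+\log p)\log n$, so the number of guessed bits is bounded by $f(p)\log n$, and since all intermediate complexes have polynomially many faces, $\bbm$ runs in time $g(p)\cdot n^{O(1)}$; thus $\bbm$ is $p$-restricted and \paraexptwo $\in$ \WP.

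The substantive point, and the step I expect to be the main obstacle, is proving that this greedy simulation is without loss of generality. Unlike for CW complexes, I cannot simply push all expansions to the front, since the ordered and unordered expansion heights genuinely differ; I must argue directly that eager $2$-collapsing between essential moves never destroys a valid completion. Protecting horns guarantees that every guessed expansion remains applicable, while the applicability of a guessed collapse $\collapse{\smallface}{\face}$ of dimension $\geq 3$ is governed solely by the higher-dimensional faces: extra $2$-collapses cannot remove $\face$ or any of its subfaces, since the edges of a face of a present tetrahedron are never free. What genuinely requires work is showing that the possibly over-collapsed $2$-complex reached after the last essential move is still erasable whenever the intended sequence was.

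To close this gap I would establish, by induction on the essential moves, an invariant comparing the complex produced by $\bbm$ with the complex reached at the corresponding step of the witnessing sequence $\SCC$, to the effect that replacing each maximal block of $2$-collapses by a greedy, horn-respecting block preserves the property of admitting a simple-homotopy equivalence to a $1$-complex using the remaining essential moves. For the final block this is precisely the confluence statement of \Cref{prop:tancer}; the difficulty is propagating it \emph{across} the interspersed expansions, where Tancer's proposition does not directly apply. I expect this confluence-across-expansions lemma — rather than the machine description or the bit-counting, both of which carry over from \Cref{thm:inWPordered} — to be the technical heart of the argument.
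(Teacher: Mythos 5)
Your machine design, guess budget, and bit-counting all match the paper's proof: guess the at most $2p$ expansions and collapses of dimension greater than two together with their relative order, interleave deterministic greedy $2$-collapses, check erasability of the final $2$-complex via \Cref{rem:greedy}, and bound the guessed bits by $O(p^2(1+\log p)\log n)$ exactly as in \Cref{thm:inWPordered}. But the step you defer as the ``confluence-across-expansions lemma'' is not a technical detail to be filled in later --- it is the entire content of the paper's proof, and the substitute you propose, blanket protection of the horns of not-yet-performed guessed expansions, does not support it. Your rule is simultaneously too strong and too weak. Too strong: in a witnessing sequence a triangle may be removed by a $2$-collapse and later re-created by a prescribed expansion; if that triangle also lies in the horn $\horn(\face,\smallface)$ of a still later guessed expansion, your machine refuses the collapse, and then the earlier prescribed expansion that was supposed to re-create the triangle becomes inapplicable (an expansion $\expansion{\smallface}{\face}$ requires both $\smallface$ and $\face$ to be \emph{absent}), so the branch corresponding to the correct guess deadlocks. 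Too weak: applicability of a future guessed expansion $\expansion{\smallface}{\face}$ requires the specific pair $\smallface,\face$ to be absent at the right moment, and unrestricted greedy collapsing gives no such guarantee --- for instance, it may remove the triangle $\face$ paired with a different free edge, leaving $\smallface$ in place and the prescribed expansion blocked.

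The paper resolves precisely this difficulty with a \emph{time-sensitive} notion in place of your static horn-protection: a $2$-collapse is declared \emph{valid} for the current prefix if appending it still leaves a prefix of some sequence compatible with the prescribed moves, and \emph{forbidden} otherwise; forbidden collapses $\collapse{\sigma}{\tau}$ are then characterized combinatorially via the next prescribed moves $m_1$ and $m_2$ involving cofaces of $\sigma$ and $\tau$, and whether an intervening \emph{prescribed expansion} re-creates the face in time --- which is what makes validity checkable deterministically in time polynomial in $p$, and is exactly the flexibility your rule lacks. Correctness of the greedy strategy is then proved by induction over the prescribed moves: writing $T^j_\bbm$ and $T^j$ for the sets of triangles present after the $j$-th prescribed move in the machine's run and in the witnessing run, one shows $T^j_\bbm \subseteq T^j$ for all $j$, by a Tancer-style exchange argument within each block of $2$-collapses (in the spirit of \Cref{prop:tancer}, which as you correctly observe only applies directly to the final block). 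To repair your proposal you would either have to adopt such a look-ahead validity test, or prove that witnessing sequences can be normalized so that no face in a protected horn is ever collapsed and re-expanded --- and the latter is not obviously true for simplicial complexes, where, as you yourself note, expansions cannot simply be commuted past collapses.
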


\begin{proof}
Assume that there exists a simple homotopy that takes $\complex$ to a $1$-complex using no more than $p$ expansions. The Turing machine $\bbm$ needs to generate one such sequence. Below, we show that, in order to achieve this, $\bbm$ does not have to guess an entire simple homotopy sequence, but only a subsequence, and the remaining part of the sequence can be found deterministically by $\bbm$.
  
  Given a $2$-dimensional complex $\complex$ with $n$ faces, $\bbm$ first nondeterministically guesses $p$ expansions and $p$ collapses, and the order in which they are to be executed. These moves are referred to in the following as \emph{prescribed moves}. While these moves are meant to appear in a specified order, they need not appear consecutively. The moves that are not prescribed are computed deterministically by $\bbm$. A simple homotopy sequence of $\complex$, that takes $\complex$ to a $1$-complex, in which all the prescribed moves occur as a subsequence, is called a sequence \emph{compatible} with the prescribed moves. By assumption, there exists a set of prescribed moves for which a compatible sequence exists.

In order to give a description of $\bbm$, we introduce some additional terminology. 
Let $\SCC^j_q$ be an intermediate simple homotopy sequence computed by $\bbm$, such that the first $j$ prescribed moves guessed by $\bbm$ form a subsequence of $\SCC^j_q$, $q$ is the total number of moves in $\SCC^j_q$, and $\SCC^j_q$ is a prefix of a set of sequences $\SSS$ compatible with the prescribed set of moves. Let $ \complex \se \complex^j_q$ be the complex obtained by executing the moves in $\SCC^j_q$. Then, a collapse $\collapse{\sigma}{\tau}$ in $K^j_q$ is \emph{valid} for this prefix if appending the collapse still leaves a compatible prefix. That is, $\SCC^j_q$ appended with the collapse $\collapse{\sigma}{\tau}$ (giving  $\SCC^j_{q+1}$) continues to be a prefix of at least one compatible sequence $\SCC \in \SSS$. A collapse that is not valid is said to be \emph{forbidden}.

Note that labelling vertices of a complex $C$ by natural numbers determines a lexicographic order on the simplices of $C$. The lexicographic order $<_C$ on simplices of $C$ can be extended to a lexicographic order $\prec$ on collapses as follows: 
If  $(\collapse{\sigma}{\tau}), (\collapse{\alpha}{\beta})$ are two collapses in $C$, then $(\collapse{\sigma}{\tau}) \prec (\collapse{\alpha}{\beta})$ if $\sigma <_C \alpha$.

  The Turing machine $\bbm$ for deciding \paraexptwo can  be described as follows: 
\begin{enumerate}
 \item  Guess $2p$ prescribed moves non-deterministically.
 \item Execute $2$-collapses in lexicographic order until no more $2$-collapses are valid.
 \item Repeat until all prescribed moves have been executed:
  \begin{enumerate}
  \item Execute the next prescribed move.
  \item Execute $2$-collapses in lexicographic order until no more collapses are valid.
 \end{enumerate}
\end{enumerate}
  Let $\SCC$ be a sequence compatible with an ordered set of prescribed moves $X$ (of cardinality $2p$), and let $\SCC_\bbm$ be a simple homotopy sequence computed by $\bbm$ as above such that $X$ is a subsequence of $\SCC_\bbm$.
Now, let $\sigma$ be a free edge associated with a $2$-collapse $\collapse{\sigma}{\tau}$ in $\SCC^j_q$ for some $j$ and $q$, where $\SCC^j_q$ is a subsequence of $\SCC_\bbm$.
If there exist future prescribed moves including cofaces of $\sigma$, then the next prescribed move including cofaces of $\sigma$ that is not an expansion involving $\sigma$ is denoted by $m_1$. Similarly, if there exist future prescribed moves including cofaces of $\tau$, then the next prescribed move including cofaces of $\tau$ that is not an expansion involving $\tau$ is denoted by $m_2$.
Note that $m_2$ cannot come before $m_1$ but we may have $m_1 = m_2$. Then, the $2$-collapse $\collapse{\sigma}{\tau}$ is forbidden if and only if $m_1$ exists and is not preceded by a future prescribed expansion involving $\sigma$ or $m_2$ exists, and is not preceded by a future prescribed expansion involving $\tau$. %
It follows that, for each free edge, checking if a collapse is forbidden (or valid) can be done deterministically in time polynomial in $p$. To see this note that the most expensive atomic operation is to check if a simplex (of dimension $1$ or $2$) is a face of a simplex that is at most $p$ dimensional, and  the number of prescribed moves is at most $2 p$. Altogether, the set of valid collapses can be computed in time polynomial in $n$ and $p$, which can also be lexicographically ordered in polynomial time.

  Finally, let $\complex'$ denote the complex obtained from $\complex$ by the sequence $\SCC_\bbm$. Then, the following claim establishes the effectiveness of the greedy strategy employed by $\bbm$.
\begin{claim}  
If there exists a simple homotopy sequence with at most $p$ expansions that takes $\complex$ to a $1$-complex, then there exists an execution branch of the Turing machine that terminates successfully, i.e., the complex $\complex'$ obtained by $\bbm$ is a $1$-complex. 
\end{claim}
\begin{proof} 
Let $\SCC$ be a simple homotopy sequence with $p$ expansions that takes $\complex$ to a $1$-complex.
Let $X_e$ be the ordered set of expansions in $\SCC$. Thus, $|X_e| = p$.
Moreover, let $X_e^+$ ($X_c^+$) denote the $d$-expansions ($d$-collapses) in $\SCC$ with $d > 2$. As in \Cref{thm:inWPordered}, by a simple inductive argument starting from the highest dimension it can be shown that $|X_c^+| = |X_e^+|$. To the $p$ expansions of $\SCC$, we associate a set $X_c$ of  collapses of $\SCC$ as follows: If $|X_c^+| < p$, then let $X_c^-$ be an arbitrary set of $d$-collapses in $\SCC$ with $d \leq 2$, and $|X_c^-| =  p  - |X_c^+| $. Now, let $X_c = X_c^+ \cup X_c^-$, so that $|X_c| = p$. 
Finally, let the ordered set $X$ of prescribed moves  be the set containing all elements of $X_c \cup X_e$ seen as a subsequence of $\SCC$.

We assume that the non-deterministic Turing machine $\bbm$ correctly guesses the specified sequence of prescribed moves $X$.
It now suffices to show the following claim about the sequence $\SCC_\bbm$.\end{proof}

\begin{claim} 
 $\SCC_\bbm$ is compatible with the prescribed moves $X$, and  $\SCC_\bbm$ takes $\complex$ to a $1$-complex if $\SCC$ takes $\complex$ to a $1$-complex.
\end{claim}
\begin{proof}
Let $\complex \se \complex^j$  denote the complex obtained from $\SCC$ after executing the $j$-th prescribed move in $\SCC$. 
We show that there exists a complex $\complex^j_\bbm$ obtained from $\SCC_\bbm$ after executing the $j$-th prescribed move in $\SCC_\bbm$. Also, let $T^j$ ($T^j_\bbm$ ) denote the set of $2$-simplices of $\complex^j$ ($\complex^j_\bbm$).  

First observe that, $\complex^0_\bbm = \complex$ exists and that $T^0_\bbm = T^0$.
We now show that the following claim is inductively true: $T^j_\bbm \subset T^j $ for all $j \in [1,2p]$. 
Suppose we make the induction hypothesis that $T^{j-1}_\bbm \subset T^{j-1} $ for some $j \in [1,2p]$.
Then, the set of forbidden collapses for $\SCC$ and $\SCC_\bbm$ are the same until the $j$-th move in $X$ can be reached. Let $\tau_1$ be the first  $2$-face of $K^{j-1}$ that is removed as part of a $2$-collapse after $j-1$ prescribed moves have been executed in $\SCC$. Without loss of generality, assume that the $2$-collapse that removes $\tau_1$ is non-prescribed. Then, there exists an edge $\sigma_1 \subset \tau_1$ such that $\tau_1$ is the unique coface of $\sigma_1$ in $K^{j-1}$. By induction hypothesis, since $T^{j-1}_\bbm \subset T^{j-1} $ the same is also true for $K^{j-1}_\bbm$. Since $\bbm$ greedily removes every valid collapse it can (in lexicographic order), at some appropriate lexicographic index, $\tau_1$ is also removed from $K^{j-1}_\bbm$ (possibly along with $\sigma_1$). Now, let $\tau_1, \tau_2, \dots, \tau_{q-1}$ be the first $q-1$ $2$-faces removed from $K^{j-1}$ (as part of non-prescribed collapses). Assume that $\tau_1, \tau_2, \dots, \tau_{q-1}$ have also been removed from $K^{j-1}_\bbm$. By the same reasoning as before, if $\tau_q$ is the $q$-th face to be removed from $K^{j-1}_\bbm$  (as part of non-prescribed collapses), then $\tau_q$ may also be removed from  $\SCC_\bbm$ as part of a valid collapse. Hence, by induction, $T^j_\bbm \subset T^j $ for all $j \in [1,2p]$. 

Finally, since by assumption, $\complex^{2p}$ collapses to a $1$-complex, by applying arguments analogous to the induction above, the same is true for $\complex^{2p}_\bbm$ since $T^{2p}_\bbm \subset  T^{2p} $.
\end{proof}

 Since given a $2$-complex with $n$ faces, $\bbm$ non-deterministically guesses $2p$ moves, as in~\Cref{thm:inWPordered}, the number of bits guessed by $\bbm$ is bounded by  $f(p)\log(n)$, where $f(p) = O(p^2 (1+\log(p)))$.
Hence, $\bbm$ is a $p$-restricted Turing machine, and \paraexptwo is in \WP.
\end{proof}

\section{\NP-completeness of \dshe}
\label{sec:NPhardness}

Note that the parametrized reduction from \pMAS to \paraexptwo (and \paraexp) is also a polynomial-time reduction (or Karp reduction) from \MAS to \dshe (and \dshetwo), since the complexity of reduction is independent of the parameter $p$ and depends only on the size of the input complex. This observation leads us to the following result.

\begin{theorem}
\label{thm:dshe}
The decision problems \dshe and \dshetwo are \NP-hard.
\end{theorem}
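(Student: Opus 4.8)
\textbf{The plan is to} derive \NP-hardness of \dshe and \dshetwo directly from the parametrized reduction already constructed in the proof of \Cref{thm:hardness}, observing that it doubles as a classical Karp reduction from \MAS. The crucial point is that the complex $\complex$ built from an instance $A = (S,R,p)$ of \pMAS can be assembled in time polynomial in $|S|$ and $|R|$, entirely independently of the parameter $p$: each gadget $\gadget_s$ has size bounded by a polynomial in the number of implications touching $s$, and the gluings prescribed by \Cref{lem:pasting} are performed in polynomial time. Thus the map $A \mapsto (\complex, p)$ is computable in polynomial time when viewed as a reduction between the \emph{unparametrized} decision problems.

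\textbf{Next I would} invoke the equivalences established in the proof of \Cref{thm:hardness}. There, statements (a), (b) and (c) were shown to be equivalent; in particular, (c) (existence of an axiom set of size at most $p$) is equivalent to (b) (a simple-homotopy equivalence to a $1$-complex using at most $p$ expansions). The key additional observation needed here is that, in both directions of that argument, the expansions involved are \emph{$3$-expansions}: in (c)$\implies$(a) one performs exactly one $3$-expansion on each gadget $\gadget_u$ with $u \in S_0$, and in (b)$\implies$(c) the axiom set is built precisely from gadgets touched by a $3$-collapse, so the relevant moves are $3$-dimensional. Consequently, the equivalence (b)$\Leftrightarrow$(c) refines to an equivalence between the existence of an axiom set of size at most $p$ and the existence of a simple-homotopy equivalence to a $1$-complex using at most $p$ expansions \emph{all of which are $3$-expansions} --- which is exactly the question posed by \dshe. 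The ordered variant (a) similarly refines to \dshetwo.

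\textbf{Therefore I would} conclude: the map $A = (S,R,p) \mapsto (\complex, p)$ is a polynomial-time reduction from \MAS to \dshe, and the same instance works for \dshetwo. Since \MAS is \NP-complete~\cite[p.~263]{MR519066}, both \dshe and \dshetwo are \NP-hard.

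\textbf{The main obstacle} I anticipate is not in the reduction itself but in justifying that \emph{only} $3$-expansions are relevant --- i.e.\ that restricting to $3$-expansions does not lose solutions. One must check that any hypothetical simple-homotopy to a $1$-complex using higher-dimensional expansions or expansions of other dimensions could be converted into one using only $3$-expansions without increasing the expansion count, so that the equivalence with the axiom-set problem is preserved. Fortunately this is already implicit in the analysis of the gadget in \Cref{ssec:gadget} and in the (b)$\implies$(c) direction of \Cref{thm:hardness}, where the counting argument bounds the number of $3$-collapses (and hence $3$-expansions) by $|S_0|$; the remaining expansions play no role in collapsing the gadgets. I would make this explicit by noting that the argument of \Cref{cl:concise} only ever appeals to $3$-collapses when enlarging $S_0$, so restricting the allowed expansions to dimension $3$ leaves the equivalence (b)$\Leftrightarrow$(c) intact.
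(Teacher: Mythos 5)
Your proposal is correct and follows essentially the same route as the paper: the paper likewise observes that the parametrized reduction from \Cref{thm:hardness} is simultaneously a polynomial-time Karp reduction from \MAS to \dshe and \dshetwo (its running time being independent of $p$), and concludes \NP-hardness from the \NP-hardness of \MAS. Your extra care in checking that the equivalence survives the restriction to $3$-expansions is a detail the paper leaves implicit, and your anticipated obstacle about converting higher-dimensional expansions is in fact a non-issue: the (c)$\implies$(a) direction already constructs a witness using only $3$-expansions, while any $3$-expansion-only solution is in particular a witness for (b), so both directions of the reduction go through without any conversion argument.
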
 

\begin{proof}
Since the \MAS problem is known to be \NP-hard~\cite{MR519066}, it follows that \dshe and \dshetwo are also \NP-hard.
\end{proof}

For the rest of the section, we assume that $\complex$ is a $2$-complex $\complex$ with $n$ faces and $m$ vertices. The total number of simplices that one can encounter in any simple homotopy sequence of $\complex$ using only $3$-expansions is at most $M = O(m^4)$. (Note that the ground set of $\complex$ is fixed since we do not allow $1$-expansions). Hence, the total number of \emph{elementary moves} that may be available at any given point in the sequence is bounded by $O(M)$. That is, $p$ itself is bounded by $O(M)$.

\begin{theorem}
 \label{thm:inNPone}
 \dshe  is in \NP.
\end{theorem}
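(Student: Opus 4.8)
The plan is to exhibit a nondeterministic Turing machine $\bbm$ witnessing membership of \dshe in \NP, bounding its running time by a polynomial in the input size without reference to the parameter $p$. The essential point, already observed in the preamble to this theorem, is that since we only permit $3$-expansions and forbid $1$-expansions, the ground set of $\complex$ is fixed at $m$ vertices. Consequently any simplex appearing anywhere in the sequence is a subset of this fixed vertex set of cardinality at most $4$ (a $3$-expansion introduces tetrahedra), so the total number of distinct simplices that can ever occur is bounded by $M = O(m^4)$. This is the crucial difference from the \WP-membership arguments: here $p$ is itself polynomially bounded by $M$, so there is no need to economize on nondeterministic bits, and a coarse guess-and-verify strategy suffices.

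Concretely, I would have $\bbm$ operate as follows. First, $\bbm$ nondeterministically guesses a sequence of at most $p \leq M$ elementary moves, where each move is either a $3$-expansion or a collapse of some dimension, described by the pair of simplices it inserts or deletes. Since each simplex is encoded in $O(\log M) = O(\log m)$ bits and there are $O(M)$ moves, the entire guessed sequence has polynomial length. Second, $\bbm$ deterministically verifies the guess: it checks that $\complex$ is the starting complex, that each purported move is legal in the complex produced by the preceding moves (for an expansion, that the relevant horn is present; for a collapse, that the face being removed is free), that the expansions number at most $p$ and are all $3$-expansions, and that the resulting complex is $1$-dimensional. Each such check is a straightforward combinatorial test on a complex with $O(M)$ faces, executable in time polynomial in $m$, and there are polynomially many moves to check, so the total verification time is polynomial in the input size.

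The reason this is correct is immediate from the definition of \dshe: a ``yes'' instance is precisely one for which such a witnessing sequence exists, and the bound $p = O(M)$ guarantees the witness has polynomial size. I do not anticipate a genuine obstacle here; the only point requiring care is the observation, stated just before the theorem, that the ground set is fixed and hence $M = O(m^4)$ caps both the number of available simplices and the parameter $p$ simultaneously. This is what turns what was a parametrized complexity question into a classical one. One should also note that, unlike in the \WP-membership proofs (\Cref{thm:inWP,thm:inWPordered}), there is no need for the greedy-collapse subroutine of \Cref{rem:greedy} to reduce the number of nondeterministic bits; here one can simply guess the full sequence outright, since its length is already polynomial in the input. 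If a cleaner presentation is desired, one may still invoke \Cref{prop:tancer} to let $\bbm$ guess only the $O(p)$ expansions and the intervening collapses that are forced by prescribed moves, checking erasability of the resulting $2$-complex deterministically, but this refinement is unnecessary for establishing membership in \NP.
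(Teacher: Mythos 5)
Your proposal is correct and takes essentially the same approach as the paper: both exhibit a nondeterministic machine that guesses the entire sequence of elementary moves outright, relying on the observation that the fixed ground set bounds the number of available simplices by $M = O(m^4)$ and hence bounds $p$ polynomially, and then verifies legality of each move and $1$-dimensionality of the final complex in polynomial time. One small correction: the witness contains at most $p$ \emph{expansions}, but its total length also includes collapses, so the bound on the number of moves is not ``at most $p$'' but rather $p$ expansions plus at most $(n+2p-1)/2$ collapses (each collapse removes two faces), which is still $O(M)$ --- your later count of $O(M)$ moves is the correct one, and the paper's own proof bounds the move count in exactly this spirit.
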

\begin{proof}
  The non-deterministic algorithm $\bbm$ for deciding \dshe first  guesses at each point in the simple homotopy sequence  starting with $\complex$, one elementary move (out of at most $O(M)$ available moves), and constructs a new complex from the move. The total number of moves made by $\bbm$ is bounded by $(\frac{n + 2 p -1}{2})$. Finally, $\bbm$ checks if the final complex is a $1$-complex. 
\end{proof}

\begin{theorem}
 \label{thm:inNPtwo}
 \dshetwo  is in \NP.
\end{theorem}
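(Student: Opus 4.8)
The plan is to exhibit a nondeterministic polynomial-time algorithm, refining the one from \Cref{thm:inNPone} so that it respects the ordering constraint of \dshetwo, namely that the (at most $p$) expansions, which here are all $3$-expansions, precede every collapse. The first step is to put a candidate simple-homotopy into a convenient normal form. Using the local-swap reordering argument already developed in the proof of \Cref{thm:inWPordered}, among the collapses that follow the expansions we may always assume that all $3$-collapses are performed before any $2$-collapse, and that the number of $3$-collapses equals the number of $3$-expansions; since we only ever perform $3$-expansions, this number is at most $p$. Hence, if any ordered simple-homotopy to a $1$-complex exists, then one exists whose structure is: at most $p$ $3$-expansions, followed by at most $p$ $3$-collapses, followed by a sequence of $2$-collapses.

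Second, the nondeterministic machine $\bbm$ guesses and applies the at most $p$ $3$-expansions to obtain a $3$-complex $\complex'$, and then guesses and applies the at most $p$ $3$-collapses, removing all tetrahedra and yielding a $2$-complex $\complex''$. Each elementary move is specified by a pair of simplices and can be encoded in $O(\log M)$ bits; since both $p$ and the total number $M = O(m^4)$ of available simplices are polynomial in $m$, the guessed certificate has polynomial size, and the validity of each move (freeness, correct adjacency, and that no expansion follows a collapse) can be verified in polynomial time. On the accepting branch all tetrahedra are removed, so $\bbm$ can confirm that $\complex''$ is genuinely $2$-dimensional before proceeding.

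Third, rather than also guessing the $2$-collapses, I would let $\bbm$ decide \emph{deterministically} whether the remaining $2$-complex $\complex''$ collapses to a $1$-complex. This is precisely an erasability test, which by \Cref{prop:tancer} and \cref{rem:greedy} can be carried out greedily in time polynomial in the size of $\complex''$: collapse triangles through free edges until none is free, and accept if and only if the result is $1$-dimensional. The machine accepts the instance exactly when some guessing branch reaches such a $1$-complex. Soundness is immediate, since $\bbm$ only accepts after actually producing a legal sequence of moves and verifying a $1$-complex; completeness follows from the normal form of the first step, because the expansion-and-$3$-collapse prefix of a witnessing sequence is a legal guess, and its remaining $2$-collapses are detected by the greedy erasability check (here using that greedy collapsing of a $2$-complex succeeds whenever the complex is erasable).

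I expect the main subtlety to be the justification of the normal form, that is, that the collapses can be reordered so that all $3$-collapses precede the $2$-collapses without invalidating the sequence and without changing the $3$-collapse count; this I would handle by the same dimension-based swap argument as in \Cref{thm:inWPordered}. The remaining bookkeeping, namely the polynomial encoding of the guessed moves and the polynomial-time greedy erasability test, is routine given the tools already established.
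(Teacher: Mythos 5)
Your proposal is correct and follows essentially the same route as the paper's own proof: nondeterministically guess the at most $p$ $3$-expansions followed by an equal number of $3$-collapses (justified by the dimension-swap normal form, which the paper leaves implicit via \Cref{thm:inWPordered}), and then deterministically test erasability of the resulting $2$-complex by the greedy procedure of \cref{rem:greedy}. The only difference is that you spell out the reordering, counting, and encoding details that the paper's two-sentence proof takes for granted.
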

\begin{proof}
  The non-deterministic algorithm $\bbm$ for deciding \dshetwo first  guesses at most $p$ $3$-expansions followed by an equal number of $3$-collapses, resulting in a $2$-complex $\complex'$
  with $n$ faces. From~\cref{rem:greedy}, the erasability of $\complex'$ can be deterministically checked in time polynomial in $n$, proving the claim.
  \end{proof}

\bibliographystyle{plainurl}%
\bibliography{expansion}

\end{document}